\numberwithin{equation}{section}
\newtheorem{definition}{Definition}[section]
\newtheorem{theorem}{Theorem}[section]
\newtheorem{proposition}{Proposition}[section]
\newtheorem{lemma}{Lemma}[section]
\newtheorem{remark}{Remark}[section]
\newtheorem{example}{Example}[section]
\begin{document}

\title{Duality and Complete Convergence for Multi-Type Additive Growth Models}

\author{Eric Foxall}


\maketitle

\begin{abstract}
We consider a class of multi-type particle systems having similar structure to the contact process and show that additivity is equivalent to the existence of a dual process, extending a result of Harris.  We give two additional characterizations of these systems, in spacetime as percolation models, and biologically as population models in which the interactions are due to crowding.  We prove a necessary and sufficient condition for the model to preserve positive correlations.  We then show that complete convergence on $\mathbb{Z}^d$ holds for a large subclass of models including the two-stage contact process and a household model, and give examples.
\end{abstract}

\section{Introduction}
The contact process \cite{ips},\cite{sis} is a model of infection spread in which each individual is in one of two states, either healthy or infectious.  The model has a number of convenient properties, namely, additivity and preservation of positive correlations \cite{gc} which in many settings allow it to be analysed more or less completely; see for example the proof, give in \cite{sis}, of complete convergence for the model on the lattices $\mathbb{Z}^d$, which characterizes not only the set of extremal invariant measures as a function of the infection parameter but also the convergence to a combination of these measures, from any initial configuration.  Since the proof does not rely on other details of the model, for example, the number of infectious types, it seems natural to suppose that the same results should hold for a much wider class of models.\\

To this end, we consider a class of particle systems that we refer to as \emph{growth models}.  The term is borrowed from \cite{growth}, however here we work in continuous rather than discrete time, and with systems having generally more than one type of active particle.  Precise definitions are given below, but we note that the class of models considered includes the two-stage contact process \cite{krone}, \cite{fox} the household model studied in \cite{konnohousehold} and a spatial analogue of any multi-type branching process (see \cite{bookmtbp} for an introduction to these processes).  Since we focus on additive systems, our class of models does not include, for example, the multitype contact process from \cite{mcp}.  In \cite{gc}, Harris showed that for contact processes that additivity is equivalent to the existence of a dual process; we generalize this result to systems with multiple active types, and give some additional descriptions of these processes, as Harris did for contact processes.  Also, we show for a nicely behaved subclass of these models that complete convergence holds in the sense proved for stochastic growth models in \cite{growth}, and that there is a well-defined critical ``surface'' in parameter space separating survival from extinction.  In general we use $\eta_t$ to denote the process under consideration.\\

We first recall the contact process.  Given a graph with sites $S$ and edges $E$, the contact process is a continuous-time stochastic process taking place on the set of configurations $\{0,1\}^S$ in which each site $x$ is either healthy ($\eta(x)=0$) or infectious ($\eta(x)=1$).  A healthy site becomes infectious at rate $\lambda$ times the number of its infectious neighbours, and an infectious site recovers at rate 1.  The graphical construction, described in \cite{sis} and used in the next section, allows us to construct the process, using Poisson point processes with the given rates, for all sets of initial configurations on the same probability space.  We can also think of the contact process as a model for population growth, substituting vacant for healthy and occupied for infectious; this interpretation is useful when we define a notion of ``primitive'' active types, which play the role of different types of organisms in a multi-type process.\\

The contact process has a certain \emph{additivity} property which simplifies its analysis.  That is, defining $\vee$ on pairs of configurations by
\begin{equation*}
(\eta\vee\eta')(x) = \max(\eta(x),\eta'(x))
\end{equation*}
and letting $\eta_t$ and $\eta_t'$ refer to the process with initial configurations $\eta_0$ and $\eta_0'$, the process with initial configuration $\eta_0\vee\eta_0'$ is given by $\eta_t \vee \eta'_t$ for each $t>0$.  In other words, by superimposing the processes obtained from two initial configurations, we recover the process run from the superimposed initial configuration.  This property is obvious from the graphical construction; see, for example, \cite{gc}.\\

Our generalization is to processes that we call \emph{multi-type particle systems}, with a finite set of types $F = \{0,1,...,N\}$, so that the state space of configurations is $F^S$.  The local dynamics is given by a family of transition rates
\begin{equation*}
\{c_T(\phi,\psi):\phi,\psi \in F^T,T\subseteq S \textrm{ finite }\}
\end{equation*}
that give the rate at which $\eta|_T$, the restriction of $\eta$ to $T$, flips to $\psi$ when its value is $\phi$.  Under certain regularity assumptions on the rates, we can construct a stochastic process with the rates given.  These types of systems are well-studied, particularly those in which $F = \{0,1\}$; the interested reader may consult \cite{ips} for sufficient conditions on the rates so that the process is uniquely defined.  In the next section we give a simple condition for this to hold, and a graphical construction that shows existence and uniqueness.\\

We say the process is \emph{additive} if $F$ is partially ordered by a \emph{join} operation, that is, a binary operation $a\vee b$ satisfying $a \vee b \geq a,b$ and $c \geq a,b \Rightarrow c \geq a\vee b$, and such that
\begin{equation*}
\eta_0'' = \eta_0 \vee \eta_0' \Rightarrow \eta_t'' = \eta_t \vee \eta_t' \textrm{ for } t>0
\end{equation*}
with respect to the graphical construction, which clearly generalizes the definition for the contact process.  Letting $\underline{0}$ denote the configuration with all $0$'s, we say the process is a \emph{growth model} if type $0\leq a$ for all $a \in F$ and if $\underline{0}$ is absorbing and is reachable from any configuration with only finitely many sites in a non-0 state.\\

Note that the definition of additivity is not completely precise; as we shall see in Section \ref{seccoup}, given a process with well-defined transition rates there may be multiple ways of construcing it graphically, some of which may satisfy the above definition while others do not, thus in Section \ref{secattr} we shall arrive at the slightly more precise Definition \ref{adddef}.  However, the present definition is enough to understand the statement of the results.\\

We now summarize the main results.  The following result was proved in \cite{gc} for $F=\{0,1\}$, using graphical methods; we prove it here for general $F$, using a simpler and more general technique that focuses on individual events.  Roughly speaking, a dual is a process going backwards in time that gives information about the forward process; a precise definition is given in Section \ref{secgrowth}.
\begin{theorem}\label{thmdual}
Every additive growth model has a dual which is an additive growth model.
\end{theorem}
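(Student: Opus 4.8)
The plan is to reduce the whole question to the dualization of a single elementary event of the graphical construction, and then to reassemble the dual by reversing time. Write the forward process as a composition $\eta_t = f_{\tau_k}\circ\cdots\circ f_{\tau_1}(\eta_0)$ of the maps $f_{\tau_i}\colon F^S\to F^S$ associated with the Poisson events $\tau_1<\cdots<\tau_k$ falling in $[0,t]$, each $f_{\tau_i}$ supported on a finite set $T$ and fixing $\underline 0$. The content of additivity, in the precise form of Definition~\ref{adddef}, is exactly that every such $f_{\tau_i}$ is a \emph{join-preserving} map, $f(\phi\vee\psi)=f(\phi)\vee f(\psi)$. So the first step is to record this reduction and to note that it suffices to produce, for each additive event $f$, a dual event $\hat f$ together with a fixed duality function $H$ satisfying the single-event identity $H(f\eta,\xi)=H(\eta,\hat f\xi)$; composing these identities along the realized events and using that the time-reversal of a Poisson process has the same law then yields $\mathbb E[H(\eta_t,\xi)]=\mathbb E[H(\eta_0,\hat\eta_t)]$, which is the duality relation required by Section~\ref{secgrowth}.

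For the single-event dualization I would work with the \emph{primitive active types}, i.e.\ the join-irreducible elements $J$ of the finite lattice $F$, since every configuration decomposes as a join of the single-site flags $\delta_{x,p}$ ($p\in J$, value $p$ at $x$ and $0$ elsewhere) that it dominates. The duality function is the pairing
\[
H(\eta,\xi)=\mathbf{1}\bigl[\exists\,(x,p)\in\xi:\ \eta(x)\ge p\bigr],
\]
where a dual configuration $\xi$ is read as the set of flags it tests for; this is exactly the support-intersection dual of Harris when $F=\{0,1\}$ and $J=\{1\}$. Using additivity to write $f(\eta)(x)=\bigvee_{(y,q)\in\mathrm{fl}(\eta)}f(\delta_{y,q})(x)$, where $\mathrm{fl}(\eta)$ is the set of flags dominated by $\eta$, together with join-primeness of $p$, the condition $f(\eta)(x)\ge p$ becomes a disjunction over the input flags. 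Reading this disjunction backwards identifies the dual event as the pullback along the \emph{flag relation} $R_f$ defined by $(y,q)\,R_f\,(x,p)\iff p\le f(\delta_{y,q})(x)$; explicitly $\hat f(\xi)=\{(y,q):\exists\,(x,p)\in\xi,\ (y,q)\,R_f\,(x,p)\}$, and the single-event identity $H(f\eta,\xi)=H(\eta,\hat f\xi)$ follows at once. The dual type space is $\widehat F=2^{J}$ with union as join and $\emptyset$ as bottom element, and each $\hat f$ is finite-range and union-preserving, so the reversed-time construction is again an additive particle system.

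Finally I would verify the dual is a \emph{growth model}. The bottom dual configuration $\underline 0$ is absorbing because $\hat f(\emptyset)=\emptyset$ for every event, the pullback of the empty flag set being empty, and $\underline 0\le\xi$ for every $\xi$ since $\emptyset$ is contained in each flag set. Reachability of $\underline 0$ from an arbitrary finite dual configuration I would deduce from the corresponding property of the forward model, transported through the single-event correspondence $f\mapsto\hat f$.

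The step I expect to be the main obstacle is the algebraic core of the single-event dualization: the backwards disjunction defining $\hat f$ uses that each primitive type $p$ is join-\emph{prime}, not merely join-irreducible, so I must either show that additivity of a growth model forces the relevant distributivity of $F$ or else build the primitive-type formalism so that the decomposition of $f(\eta)(x)$ interacts correctly with the relations $\ge p$. A secondary technical point is confirming that the reversed-time object inherits every growth-model axiom, in particular reachability of $\underline 0$ for the dual.
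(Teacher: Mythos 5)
Your architecture --- dualize one graphical event at a time via a pairing $H$, verify the single-event identity $H(f\eta,\xi)=H(\eta,\hat f\xi)$, and compose backwards through the spacetime event map --- is exactly the skeleton of the paper's proof, and your pairing is the paper's compatibility relation $\eta\sim\zeta$ specialized to the case where each $\zeta(x)$ is a union of principal up-sets of primitive types. But the obstacle you flag at the end is not a technicality: it is fatal to the construction as written, and neither of your proposed escape routes works. Additivity of a growth model does \emph{not} force $F$ to be distributive, so join-irreducible elements need not be join-prime. Take $F=\{0,1,2,3,4\}$ with $0<1,2,3<4$ and $1,2,3$ pairwise incomparable, so that $1\vee 2=1\vee 3=2\vee 3=4$; the paper's own ``3-type system'' in Section \ref{secprim} is an additive growth model on this lattice. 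For a transmission event $e$ with $e(\phi)(x)=\phi(x)\vee\phi(y)$ and the single flag $\xi=\{(x,3)\}$, the configuration $\phi$ with $\phi(x)=1$, $\phi(y)=2$ satisfies $e(\phi)(x)=4\ge 3$, yet your $\hat f(\xi)=\{(x,3),(y,3)\}$ gives $H(\phi,\hat f\xi)=0$: the single-event identity fails. The root cause is that the set $E_3=\{b:b\ge 3\}=\{3,4\}$ tested by your flag is not stable under ``undoing a join''.

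The paper's fix is to enlarge the dual state space: a dual type is any set $E$ of active types that is both \emph{increasing} and \emph{decomposable} (meaning $a\vee b\in E\Rightarrow a\in E$ or $b\in E$), with union as the join. The principal up-set of a primitive type is always increasing but is decomposable only in the multi-colour case (Lemma \ref{mcpd}), which is why your $2^{J}$ formalism would be legitimate after the lift of Theorem \ref{thmlift} but not for a general additive growth model, and Theorem \ref{thmdual} is stated for the latter. With the larger class, the paper's induction pushes $\Lambda_s=\{\eta_{t-s}:\eta_t\sim\zeta_0\}$ backwards through each event: additivity splits $e(\phi)(y)=\bigvee_x e(\delta_x(\phi(x)))(y)$ and then $e(\delta_x(a\vee b))=e(\delta_x(a))\vee e(\delta_x(b))$, and it is \emph{decomposability of the dual type} (rather than primeness of a forward type) that converts membership of the join into the disjunction you need; increasingness together with attractiveness then shows that the pulled-back set $\{a:e(\delta_x(a))(y)\in\theta(y)\}$ is again increasing and decomposable, closing the induction. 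If you replace $2^{J}$ by the lattice of increasing decomposable subsets and rerun your argument, the rest of your outline (union-preservation of $\hat f$, hence additivity of the dual; absorbingness and minimality of $\underline 0$) goes through essentially as you describe. One smaller point: the duality relation required in Section \ref{secgrowth} is pathwise on a single spacetime event map, $\eta_t\sim\zeta_0\Leftrightarrow\eta_0\sim\zeta_t$, which your composed single-event identities already deliver directly; the appeal to reversal in law of the Poisson processes is neither needed nor sufficient for that pathwise statement.
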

The contact process turns out to be \emph{self-dual}, i.e., the dual of the contact process is itself the contact process.  This fact allows us to prove, for example, that the contact process has \emph{single-site survival} (i.e. has a positive probability of surviving for all time, started from a single infected site - note this property does not depend on the site considered, provided the graph is connected) if and only if it has a non-trivial upper invariant measure (started from all sites infected, converges to an invariant measure that concentrates on configurations with infinitely many infected sites).  In Example \ref{ex1} we show that for each $N$ there is an $N$-stage contact process which is also self-dual, and thus has also the property just stated.\\

We turn now to showing that for a growth model, the existence of a dual implies additivity.  To do this it is useful to pass to a slightly better type of growth model.  An additive growth model has a set of \emph{primitive} types defined to be those $a \in F$ such that $a \neq b \vee c$ for any $b,c \in F$ with $b\neq a$, $c\neq a$.  If we imagine primitive types $a,b$ as individual organisms, we can think of $a\vee b$ as a state in which both an $a$ and a $b$ organism live together at a single site.  We can then expand the set of types $F$ so that it distinguishes all combinations of primitive types; a growth model that satisfies this condition is called \emph{multi-colour}.  The next result states that after expanding the set of types, we can always obtain a well-defined process that projects down to the original process.
\begin{theorem}\label{thmlift}
Every additive growth model $\eta$ with types $F$ has a \emph{lift} $\xi$ with types $F_*$ which is an additive multi-colour growth model and satisfies
\begin{equation*}
\pi(\xi_0) = \eta_0 \Rightarrow \pi(\xi_t) = \eta_t \textrm{ for }t>0
\end{equation*}
where $\pi:F_*^S\rightarrow F^S$ is given by $\pi(\xi)(x) = \pi(\xi(x))$ for some $\pi:F_*\rightarrow F$ that preserves the join.
\end{theorem}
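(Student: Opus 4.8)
The plan is to realize $F_*$ as the free join-semilattice on the primitive types and to lift the graphical construction one event at a time. First I would record the structural fact that, since $F$ is a finite join-semilattice with bottom $0$, every type is a join of primitive types: if $b$ is neither $0$ nor primitive then $b = c \vee d$ with $c,d < b$, and the claim follows by induction on the down-set of $b$. Let $P$ denote the set of primitive types other than $0$, and set $F_* = 2^P$ with join given by union; this is a finite join-semilattice with bottom $\emptyset$ whose own join-irreducible elements are exactly the singletons, so any model built on $F_*$ whose primitives are the singletons is automatically multi-colour. Define $\pi : F_* \to F$ by $\pi(A) = \bigvee_{a \in A} a$, with $\pi(\emptyset) = 0$, acting on configurations by $\pi(\xi)(x) = \pi(\xi(x))$; then $\pi(A \cup B) = \pi(A) \vee \pi(B)$, so $\pi$ preserves the join, and $\pi$ is onto by the structural fact. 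I note for later that $\pi^{-1}(\underline{0}) = \{\underline{\emptyset}\}$, since every element of $P$ lies strictly above $0$.

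The heart of the argument is to lift each event. In an additive graphical construction each elementary update is a local additive map $f : F^T \to F^T$ fixing $\underline{0}$, and by additivity $f$ is determined by its action on the atoms $\delta_{x,a}$ (the configuration equal to $a$ at $x \in T$ and $0$ elsewhere, $a \in P$) via $f(\phi) = \bigvee_{x,\,a \le \phi(x),\,a\in P} f(\delta_{x,a})$. I would define a companion map $f_* : F_*^T \to F_*^T$ as follows: fix any $\lambda : F \to F_*$ with $\pi \circ \lambda = \mathrm{id}$ (for instance $\lambda(b) = \{a \in P : a \le b\}$), set $f_*(\delta^*_{x,\{a\}})(y) = \lambda\big(f(\delta_{x,a})(y)\big)$ on the $F_*$-atoms $\delta^*_{x,\{a\}}$, and extend to all of $F_*^T$ by union-additivity. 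Because $F_* = 2^P$ is free on the singletons, every configuration is a union of atoms in a unique way, so this extension is automatically well-defined and additive, and $f_*$ fixes $\underline{\emptyset}$ since it has no atoms.

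Next I would verify the intertwining $\pi \circ f_* = f \circ \pi$. Writing an arbitrary $\xi \in F_*^T$ as the union of its atoms and using that both $\pi$ and $f_*$ respect joins, one computes $\pi(f_*(\xi)) = \bigvee_{x,\,a\in\xi(x)} \pi\big(f_*(\delta^*_{x,\{a\}})\big) = \bigvee_{x,\,a\in\xi(x)} f(\delta_{x,a}) = f(\pi(\xi))$, where the middle equality uses $\pi \circ \lambda = \mathrm{id}$ and the last uses additivity of $f$ together with $\pi(\xi) = \bigvee_{x,\,a\in\xi(x)} \delta_{x,a}$. I would then build $\xi$ from the same Poisson data as $\eta$, applying each $f_*$ in place of $f$; since the $f_*$ are local and additive with rates inherited from $\eta$, the existence and uniqueness argument used to construct $\eta$ applies verbatim, and $\xi$ is an additive model on $F_*$ whose primitives are the singletons, hence multi-colour. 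Applying the intertwining at each event and inducting over the locally finitely many events in $[0,t]$ yields $\pi(\xi_0) = \eta_0 \Rightarrow \pi(\xi_t) = \eta_t$.

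Finally I would check that $\xi$ is a growth model, which now follows almost for free. One has $\emptyset \le A$ for all $A$, and $\underline{\emptyset}$ is absorbing because every $f_*$ fixes it. For reachability, given a finite $\xi_0$ the projection $\pi(\xi_0)$ is a finite configuration, so by the growth-model property of $\eta$ there is a positive-probability realization of the graphical construction driving $\eta$ from $\pi(\xi_0)$ to $\underline{0}$; the corresponding lifted realization drives $\xi$ from $\xi_0$ to some $\xi_t$ with $\pi(\xi_t) = \underline{0}$, and since $\pi^{-1}(\underline{0}) = \{\underline{\emptyset}\}$ this forces $\xi_t = \underline{\emptyset}$. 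The step I expect to require the most care is the lifting of events: making precise that additive events are determined by their action on single-primitive atoms, and that passing to the free semilattice $F_* = 2^P$ removes the consistency constraints that a lift would otherwise have to satisfy, so that the choice of $\lambda$ is genuinely unconstrained while the intertwining still holds.
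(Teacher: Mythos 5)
Your proof is correct, but it takes a genuinely different route from the paper's. The paper builds $F_*$ as the set of \emph{colour combinations} $\mathcal{C}$, i.e.\ antichains of primitive types, with the join $C\vee C'$ retaining only maximal elements; this keeps the partial order among primitives ($a<a'$ forces $\{a\}\vee\{a'\}=\{a'\}$), and the price is that the lifted mapping must satisfy the consistency constraint $e_*(\delta_x(a'))\geq e_*(\delta_x(a))$ for comparable primitives $a<a'$. The paper arranges this by a layer-by-layer extension over the layer partition of $F_p$, defining $e_*(\delta_x(a))$ on minimal primitives as the join $\vee\pi^{-1}(e(\delta_x(a)))$ of \emph{all} preimages and then joining in the images of lower layers as it moves up. You instead take $F_*$ to be the free join-semilattice $2^{F_p}$ with union as join, which makes all lifted primitives incomparable, eliminates every relation among atoms, and therefore lets you assign $f_*$ on atoms via an arbitrary section $\lambda$ of $\pi$ and extend by union-additivity with no consistency to check; the intertwining $\pi\circ f_*=f\circ\pi$ then follows from $\pi\circ\lambda=\mathrm{id}$ and additivity of $f$ exactly as you compute, and your verification of the growth-model axioms (absorbing and reachable $\underline{0}$ via $\pi^{-1}(\underline{0})=\{\underline{\emptyset}\}$) is sound. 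What your version buys is simplicity and automatic well-definedness; what it gives up is minimality and the order structure on primitives: when $F_p$ contains comparable types (as in the $N$-stage contact process) your type set is strictly larger than $\mathcal{C}$, a primitive $a$ can have several preimages, and comparable organisms coexist at a site as $\{a,a'\}$ rather than the stronger excluding the weaker. That is harmless for the statement as written, but the paper's later arguments (the identification of dual primitives in Proposition \ref{pdt}, the exclusion interpretation of Section \ref{secpop}, and the promotion/demotion classification behind Theorem \ref{thmPC}) lean on the order-preserving lift of primitives, so your $F_*$ could not be substituted for the paper's wholesale without revisiting those sections.
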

The definition of a multi-colour growth model does not require that the model be additive.  However, for this type of model we can show that having a dual and being additive are equivalent.
\begin{theorem}\label{thmadd}
If a multi-colour growth model has a dual, then the model is additive.
\end{theorem}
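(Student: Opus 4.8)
The plan is to derive additivity from duality by descending to the level of a single elementary update of the graphical construction, exactly as Theorem \ref{thmdual} is proved by dualizing one event at a time. Concretely, I would show that the existence of a dual forces every local update map to be a join-homomorphism, and then observe that the full time-$t$ evolution is obtained by composing these maps. Since the value of $\eta_t$ at any site depends (almost surely) on only finitely many events in a bounded spacetime region, it suffices to treat one deterministic event $e$ acting on $F^T$ for a finite $T$; if each such $e$ satisfies $e(\phi \vee \phi') = e(\phi) \vee e(\phi')$, then so does any finite composition, and hence $\eta_t^{\eta_0 \vee \eta_0'} = \eta_t^{\eta_0} \vee \eta_t^{\eta_0'}$, which is precisely additivity.

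The first real step is to recall the duality relation from Section \ref{secgrowth} and isolate its duality function $H(\phi,\xi)$, which pairs a forward configuration $\phi$ with a dual configuration $\xi$ and takes Boolean values (as in the contact-process pairing $H(\eta,A)=\mathbf{1}[\eta\cap A\neq\emptyset]$). I expect to need two structural properties of $H$. The first is \emph{join-linearity in the forward argument}, $H(\phi\vee\phi',\xi)=H(\phi,\xi)\vee H(\phi',\xi)$, which I anticipate reading off directly from the form of the duality function for growth models. The second is \emph{separation}: whenever $H(\phi,\xi)=H(\phi',\xi)$ for every dual configuration $\xi$, one has $\phi=\phi'$. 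This second property is exactly where the multi-colour hypothesis is used: because $F$ distinguishes every combination of primitive types, the dual pairing resolves each primitive colour at each site independently, so no two distinct forward configurations can agree against all dual states.

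With these in hand, the dual event $\hat e$ associated to $e$ should satisfy the local adjoint relation $H(e(\phi),\xi)=H(\phi,\hat e(\xi))$ for all $\phi,\xi$, since this is what makes the process-level duality persist across a single event. Combining it with join-linearity gives, for every $\phi,\phi',\xi$,
\begin{align*}
H(e(\phi \vee \phi'), \xi)
&= H(\phi \vee \phi', \hat e(\xi))
= H(\phi, \hat e(\xi)) \vee H(\phi', \hat e(\xi)) \\
&= H(e(\phi), \xi) \vee H(e(\phi'), \xi)
= H(e(\phi) \vee e(\phi'), \xi).
\end{align*}
By separation this upgrades to the equality of maps $e(\phi\vee\phi')=e(\phi)\vee e(\phi')$, so every elementary update is a join-homomorphism, and additivity follows by the composition argument of the first paragraph.

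I expect the main obstacle to be the establishment of the two structural properties of $H$, and in particular separation. Join-linearity should be a formal consequence of the shape of the duality function, but separation genuinely requires the multi-colour structure: one must verify that the dual configurations are rich enough to detect each primitive colour at each site, so that the pointwise identity above can be promoted to equality of the update maps. If separation failed — as it can when the type lattice is too coarse to resolve all combinations of primitive types — the argument would stall at the pointwise level and additivity could be lost, which is exactly the reason the multi-colour hypothesis is imposed (and why Theorem \ref{thmlift} is available to reduce general additive growth models to this setting).
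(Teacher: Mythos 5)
Your argument is correct and is, at bottom, the paper's proof with the double-dual scaffolding removed. The paper proceeds by constructing the double dual $\overline{F}$, identifying $F$ with $\overline{F}$ via $\phi \mapsto \Xi_\phi$ (Proposition \ref{pdt} applied twice, together with Lemma \ref{ddc}), checking that $\overline{e}(\Xi_\phi) = \Xi_{e(\phi)}$, and importing additivity from the double dual, which is additive because the dual of any model possessing a dual is automatically additive and Theorem \ref{thmdual} then applies to that dual. Your two axioms for the pairing $H(\phi,\theta)=\mathbf{1}[\phi\sim\theta]$ are exactly the two halves of that construction: join-linearity in the forward argument follows immediately from the requirement that each dual type be increasing and decomposable (and, as you say, needs no multi-colour hypothesis), while separation is precisely the injectivity of $b \mapsto E_b = \{c : C(c)\succ C(b)\}$, which is the content of Proposition \ref{pdt}; it holds because the maximal elements of $\{p \in F_p : p \leq a' \textrm{ for some } a' \in C(a)\}$ recover $C(a)$, so single-site dual configurations distinguish any two types with distinct decompositions, and multi-colour guarantees distinct types have distinct decompositions. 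Your route buys a small economy --- you never need the double dual to exist as a process, nor Theorem \ref{thmdual} applied to the dual, since the pointwise identity plus separation closes the argument directly --- at the cost of having to state and prove the separation lemma explicitly, which you have correctly flagged as the one outstanding obligation and correctly located as the unique point where the multi-colour hypothesis enters.
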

Because they are defined in a natural way in terms of primitive types, additive multi-colour growth models have a couple of useful interpretations, namely
\begin{itemize}
\item as percolation models in spacetime, and
\item as population models in which organisms neither directly inhibit, nor help one another, but interact only through the effects of crowding
\end{itemize}
These interpretations are described in Sections \ref{secperc} and \ref{secpop}, and some basic facts are proved regarding them.  The percolation viewpoint has been used successfully to prove results about the contact process, as in \cite{basic}, where a contour method for proving survival (and estimating the critical value) is described.  The population viewpoint allows us to construct, from any multi-type branching process, a spatial model having the same set of transitions but with the crowding effect arising from the ``at most one particle of each type per site'' rule.\\

For an additive multi-colour growth model we characterize the property PC, or positive correlations, in terms of the individual transitions, as described in Section \ref{secPC}.  PC is a helpful property for particle systems; for example, in \cite{crit} it is a crucial property in comparing the contact process to oriented percolation.  Included in this characterization are models having only \emph{productive} or \emph{destructive} transitions, that is, models in which at each transition, either particles are produced or ``upgraded'' to stronger particles, or else particles are killed or ``downgraded'' to weaker particles, but not both.  If we attach a parameter to each type of interaction in such a model, as should be clear after the next section there is a natural sense in which the model is increasing with respect to occurrence of productive interactions and is decreasing with respect to destructive interactions.  It follows that in the resulting parameter space, if we fix all but one parameter $\lambda$ then there is a critical value $\lambda_c$ (possibly equal to $0$ or $\infty$) which is decreasing with respect to the rate of (other) productive transitions and increasing with respect to the rate of (other) destructive transitions, such that single-site survival occurs when $\lambda>\lambda_c$ and does not occur when $\lambda<\lambda_c$ - this is the so-called ``critical surface'' alluded to earlier.\\

By taking the join of all configurations we see that an additive model has a largest configuration.  If with respect to the graphical construction, $\eta_0\geq \eta_0'$ implies $\eta_t \geq \eta_t'$ for $t>0$ we say the model is \emph{attractive}; it follows easily from the definition of the partial order in terms of the join that an additive model is attractive.  For an attractive model with a largest configuration, the distribution of the process $\eta_t$ started from that configuration is decreasing in time, since for $s<t$, $\eta_0 \geq \eta_{t-s}$ and evolving by $s$ forward in time, by attractiveness $\eta_s \geq \eta_t$.  By first examining cylinder sets of configurations whose probabilities decrease in time, and then using these to determine finite-dimensional distributions, we can show the distribution of the process converges to an \emph{upper invariant measure} $\nu$.  We say the model exhibits \emph{complete convergence} if for any initial configuration $\eta_0$ we have
\begin{equation*}
\eta_t \Rightarrow (1-\sigma(\eta_0))\delta_0 + \sigma(\eta_0)\nu\\
\end{equation*}
where $\delta_0$ is the measure that concentrates on the all zero configuration and $\sigma(\eta_0) = \mathbb{P}(\eta_t \neq \underline{0} \,\,\forall t>0\}$ is the probability of survival starting from $\eta_0$, and $\Rightarrow$ denotes weak convergence of measures.  The following result is proved in Section \ref{seccomp}, and uses the construction of \cite {crit}.  Translation invariant and symmetric means that transitions are invariant under translation and under reflection in each coordinate.  Irreducible means that any active type at one site can produce any active type at any other site via some sequence of transitions.  ``On $\mathbb{Z}^d$'' just means that the set of sites is the d-dimensional integer lattice $\mathbb{Z}^d$, where $d\geq 1$ is any integer.

\begin{theorem}\label{thmcc}
An additive multi-colour growth model on $\mathbb{Z}^d$ which is irreducible, translation invariant, symmetric and has only productive and destructive interactions exhibits complete convergence.  Moreover, single-site survival is equivalent to having $\nu \neq \delta_0$.  
\end{theorem}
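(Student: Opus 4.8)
The plan is to follow the classical strategy for proving complete convergence of the contact process, as in \cite{sis} and \cite{crit}, adapting each ingredient to the multi-type additive setting using the structure we have developed. The proof rests on three pillars: the self-duality-type relation coming from Theorem \ref{thmdual}, the preservation of positive correlations (PC) established in Section \ref{secPC}, and a block/renormalization comparison with oriented percolation drawn from \cite{crit}. First I would use the dual process $\hat\eta$ furnished by Theorem \ref{thmdual} to write, for a finite set of sites and a prescribed target configuration, the fundamental duality identity expressing $\mathbb{P}(\eta_t \text{ agrees in a certain way with } \psi)$ in terms of the dual started from $\psi$ run for time $t$. Because the model is additive and multi-colour, this identity lets me convert statements about the forward process started from an arbitrary $\eta_0$ into statements about whether the dual, viewed backward from the observation window, connects to the set of initially occupied sites.

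Next I would separate the two halves of the limiting measure. Survival with probability $\sigma(\eta_0)$ corresponds, via duality, to the event that the backward/dual cluster emanating from the observation region reaches the support of $\eta_0$; on this event I must show the forward configuration looks like $\nu$, and on its complement the process dies out locally so the configuration looks like $\underline 0$. The upper invariant measure $\nu$ exists by the attractiveness argument already given in the excerpt (taking the join of all configurations yields a largest configuration whose law decreases in time). The key probabilistic input is that, under irreducibility, translation invariance, symmetry, and the productive/destructive hypothesis, the dual process restricted to a survival event spreads linearly and fills space, so that the joint law of $\eta_t$ on any finite window converges to that of $\nu$ conditioned on the relevant connection event. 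Here PC is used to decouple the ``survives'' event from the ``looks like $\nu$'' event in the limit, exactly as in the contact process proof, by sandwiching correlations between products of marginals.

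The technical engine is the comparison with oriented percolation from \cite{crit}. I would set up a block construction: partition spacetime into large blocks and show that, whenever the process is locally supercritical (i.e. single-site survival holds), the event that a block is ``good'' (the active region successfully propagates to neighbouring blocks with the correct multi-colour composition) has probability close to $1$, uniformly, so that the dependent-percolation comparison theorem of \cite{crit} applies. Because the model has only productive and destructive interactions, each transition monotonically favours or disfavours occupation, which gives the requisite monotone coupling needed to verify the block events and to push the comparison through; symmetry and translation invariance guarantee the block events are identically distributed and the construction is isotropic enough for the \cite{crit} machinery. From the supercritical percolation cluster one obtains linear growth and the complete-convergence dichotomy, while the equivalence of single-site survival with $\nu\neq\delta_0$ follows because single-site survival is precisely the statement that the block construction is supercritical, which in turn is what forces the upper invariant measure to charge configurations other than $\underline 0$.

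The main obstacle I anticipate is verifying the block/percolation comparison in the genuinely multi-type setting: in the single-type contact process a ``good'' block simply needs one occupied site to seed its neighbours, but here I must ensure that the full set of primitive types needed to reconstruct $\nu$ propagates together, and that destructive transitions do not conspire to eliminate some colour before it can spread. Irreducibility is the hypothesis that should rescue this — it guarantees any active type can eventually produce any other — but turning irreducibility into a quantitative, uniform-in-space lower bound on the probability of a good multi-colour block, suitable for feeding into the \cite{crit} comparison, is the delicate part. I expect the productive/destructive dichotomy to be essential here, since it is what lets me treat the productive transitions as a monotone ``growth'' skeleton on which irreducibility can act while controlling the destructive part by a separate monotone bound.
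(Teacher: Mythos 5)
Your plan follows the paper's proof almost exactly: duality (Theorem \ref{thmdual}), positive correlations from the productive/destructive hypothesis (Theorem \ref{thmPC}), the block comparison with oriented percolation from \cite{crit}, and the complete-convergence machinery of \cite{growth}, with irreducibility used to guarantee that the forward process can intersect the dual in the observation window. The one genuine gap is in your treatment of the dual process. The \cite{growth} argument requires that \emph{both} the forward process and the dual dominate supercritical oriented percolation after rescaling, so you must run the entire block construction on the dual as well; for that you need to know that the dual is itself an additive multi-colour growth model with only productive and destructive transitions (hence attractive, with PC, translation invariant, symmetric). You assert that ``the dual process restricted to a survival event spreads linearly and fills space'' but never establish that the dual satisfies the hypotheses that make the block construction applicable to it. The paper isolates exactly this as a separate lemma: reversing a productive transition gives a productive transition and likewise for a destructive one, so the dual of such a model is again a model of the same class.

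The same omission affects the ``moreover'' clause, which you argue in only one direction: single-site survival implies a supercritical block construction, which forces $\nu \neq \delta_0$. The converse requires duality: $\nu \neq \delta_0$ is equivalent, via the duality relation, to single-site survival of the \emph{dual}, and to get back to survival of the original model you must run the same survival-implies-nontrivial-upper-invariant-measure argument on the dual and then use that the double dual is the model itself (the identification underlying Theorem \ref{thmadd}). Without the lemma that the dual is again of the same productive/destructive class, this loop does not close. Once that lemma is added, your outline matches the paper's proof; your worry about propagating the full multi-colour composition is not an additional obstacle, since the block events only track the presence of active sites and irreducibility converts any active type into any other with positive probability within a bounded spacetime region.
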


The paper is organized as follows.  In Section \ref{seccoup} we introduce the graphical method and describe the set of couplings that it furnishes.  In Section \ref{secattr} we introduce the notions of attractiveness and additivity.  In Section \ref{secgrowth} we define growth models and prove Theorem \ref{thmdual}.  In Section \ref{secprim} we define multi-colour systems and prove Theorems \ref{thmlift} and \ref{thmadd}.  In Sections \ref{secperc}, \ref{secpop} and \ref{secPC} we discuss percolation, population, and positive correlations.  In Section \ref{seccomp} we prove complete convergence.\\

\section{Graphical method and coupling}\label{seccoup}
We begin by introducing a graphical method with the intention of describing a large but manageable class of couplings for multi-type particle systems.  Graphical methods have been extensively applied to particle systems; see \cite{gc} for one of the first papers on the subject.\\

\begin{definition}
A \emph{local mapping} is a function $e:F^T\rightarrow F^T$ for some finite $T\subseteq S$.  An \emph{event structure} is a family of distinct local mappings $\{e_i\}$ and associated (positive) rates $\{r_i\}$ with each $e_i:F^{T_i}\rightarrow F^{T_i}$ for some $T_i$.
\end{definition}

Our goal is to turn an event structure into a process $\eta_t$.  Since $\eta_t$ is constructed from a collection of spacetime events, we call this the \emph{event construction}.  First define a family of independent Poisson point processes $\{u_i\}$ such that each $u_i$ has intensity $r_i$.  Then, place the events corresponding to the processes $\{u_i\}$ on the spacetime set $\mathcal{S} = S\times\mathbb{R}^+$ to obtain a \emph{spacetime event map}.  When an event occurs that corresponds to $u_i$, we will apply the mapping $e_i$, as we now describe.\\

If $|S|<\infty$ (i.e., the set of sites is finite) and for each $T\subseteq S$ at most finitely many mappings $e_i$ send $F^T\rightarrow F^T$, then for each $t>0$, only finitely many events occur in the interval $(0,t]$.  Thus, given the initial configuration $\eta_0$ we can determine $\eta_t$ for $t>0$ by accounting for each event.  If $u_i$ has an event at time $s$, use $\eta_{s^-}$ to denote the state just prior to the event.  Letting $\phi = \eta_{s^-}|_{T}$ be the restriction of $\eta_{s^-}$ to $T$, we let
\begin{equation*}
\eta_s|_{T} = e_i(\phi)
\end{equation*}
and $\eta_s(x) = \eta_{s^-}(x)$ for $x \notin T$.\\

If $|S|=\infty$, to determine the state at each spacetime point $(x,t) \in \mathcal{S}$ it suffices that the state of only a bounded set of points in $\mathcal{S}$ can affect the state at $(x,t)$; the following condition is sufficient for this to hold.\\

\begin{lemma}\label{bp}
Given an event structure $\{e_i\}$, $\{r_i\}$, if there is an $M$ and an $L$ so that
\begin{itemize}
\item $r_i = 0$ if $|T_i|>M$ and 
\item each $r_i \leq L$
\end{itemize}
then for each spacetime point $(x,t) \in \mathcal{S}$, the state at $(x,t)$ is only affected by the state of a bounded set of points in $\mathcal{S}$.
\end{lemma}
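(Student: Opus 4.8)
The plan is to trace backward in time the set of sites whose states are needed in order to determine the state at $(x,t)$, and to show that this ``influence set'' stays finite over $[0,t]$ by dominating its growth with a non-exploding pure-birth process; this is the standard Harris-type non-explosion argument adapted to the present event construction.

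First I would make precise the dependency structure furnished by the event construction. Observe that if $u_i$ has an event at time $\tau \le t$ with support $T_i$, then, since $e_i$ maps $F^{T_i}\to F^{T_i}$, the post-event states on $T_i$ depend on the pre-event states of \emph{all} of $T_i$, while the states off $T_i$ are unchanged. Hence an event at time $\tau$ can influence the state at $(x,t)$ only if its support is linked to $x$ by a chain of events with pairwise-overlapping supports occurring at increasing times in $(\tau,t]$. I would encode this by defining, for $s$ decreasing from $t$ to $0$, a set-valued process $A_s\subseteq S$ with $A_t=\{x\}$: whenever, reading time backward, an event $u_i$ occurs at time $\tau$ with $T_i\cap A_\tau\neq\emptyset$, set $A_{\tau^-}=A_\tau\cup T_i$, and between such events let $A$ be constant. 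By construction $A_s$ is nondecreasing as $s$ decreases, and the state at $(x,t)$ is determined by the configuration restricted to $A_0$ at time $0$ together with the events occurring in the region $A_0\times[0,t]$.

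Next I would bound the growth of $|A_s|$. Each update adds at most $|T_i|-1\le M-1$ new sites, by the first hypothesis. The rate at which an update occurs, given the current set $A$, is at most $\sum_{y\in A}\sum_{i\,:\,y\in T_i} r_i \le K\,|A|$, where $K:=\sup_{y}\sum_{i\,:\,y\in T_i} r_i$; the bound $r_i\le L$ together with the local finiteness of the event structure (finitely many distinct mappings on each fixed support, and each site meeting only boundedly many supports of size $\le M$) gives $K<\infty$. Thus, as $s$ runs from $t$ down to $0$, the quantity $|A_s|$ is stochastically dominated by a pure-birth process started at $1$ in which each jump has size $M-1$ and the jump rate is $K$ times the current size.

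Finally I would invoke the standard fact that such a linear (Yule-type) birth process does not explode in finite time: its expectation grows like $e^{(M-1)K(t-s)}$ and in particular stays finite for every $s\in[0,t]$, so almost surely only finitely many updates occur before the exploration reaches time $0$. Hence $A_0$ is a.s.\ finite, the set of spacetime points that can affect $(x,t)$ is contained in the bounded region $A_0\times[0,t]$, and only finitely many events fall in this region. The main obstacle is the middle step: setting up the backward exploration so that its additions are genuinely governed by the current influence set, and extracting the uniform per-site rate bound $K<\infty$ from the hypotheses; once the linear domination is in place, the non-explosion conclusion is routine.
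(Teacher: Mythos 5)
Your proof is correct and takes essentially the same approach as the paper's: a backward-in-time exploration of the influence set, stochastically dominated by a linear branching process with offspring number at most $M$ and per-individual rate bounded by a constant, followed by the standard non-explosion fact. The paper's proof is a one-sentence statement of exactly this domination; note that both arguments read the hypotheses as implying a finite per-site total event rate (your $K<\infty$), so your extra care in isolating that step is the only difference.
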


\begin{proof}
Looking backwards in time, the number of sites $y$ at time $t-s$ whose state may affect the state at $(x,t)$ is bounded by a branching process in which each individual gives birth to $M$ individuals at rate $L$, which is known to be finite for finite times.\\
\end{proof}

\begin{remark}
These are fairly mild conditions, since we assume only that each event involves (and depends on) at most $M$ sites, and that each type of event occurs at rate at most $L$.  Note, however, that there are well-behaved systems not satisfying the first condition; see for example the nearest particle systems of Chapter 7 in \cite{ips}.
\end{remark}

To construct a multi-type particle system from its transition rates, one way to do it is to associate to each transition $c_T(\phi,\psi)$ the mapping $e_i$ that sends $\phi$ to $\psi$ while leaving other local configurations unchanged, and the rate $r_i = c_T(\phi,\psi)$.  Letting $c_T = \sum_{\phi,\psi}c_T(\phi,\psi)$, to satisfy the conditions of Lemma \ref{bp} it suffices to have an $M$ and an $L$ so that
\begin{itemize}
\item $c_T = 0$ if $|T|>M$ and 
\item $c_T \leq L$
\end{itemize}
for each $T$.  We call this the \emph{independent} event construction, since an independent point process is assigned to each transition.  There may be other constructions, which we now describe.\\

Recall that a coupling is a probability space on which two or more processes are defined.  In the event construction, realizations of the process starting from any set of initial configurations are already coupled, since they are determined from the same spacetime event map.\\

For a multi-type particle system, the independent event construction leads to a certain coupling of realizations.  Using the same point processes to determine some transitions, we can obtain other couplings.\\

\begin{definition}
An \emph{event coupling} for a multi-type particle system is an event structure $\{e_i\}$, $\{r_i\}$, $e_i:F^{T_i}\rightarrow F^{T_i}$ such that
\begin{itemize}
\item corresponding to each transition $c_T(\phi,\psi)$ is a subcollection $I_c$ of indices $i$ such that
\begin{enumerate}
\item $T_i\supseteq T$,
\item $e_i(\phi')|_T = \psi$ and $e_i(\phi')|_{T_i - T} = \phi'|_{T_i - T}$ whenever $\phi'|_T = \phi$, and
\item $c_T(\phi,\psi) = \sum_{i \in I_c} r_i$
\end{enumerate}
\item aside from this, $e_i(\phi)=\phi$, and
\item if $e_i$ is assigned to the transitions $c_{T_j}(\phi_j,\psi_j)$, $j=1,2,...$ then
\begin{equation}\label{ecc}
\{\eta:\eta|_{T_j} = \phi_j\} \cap \{\eta:\eta|_{T_k} = \phi_k\} = \emptyset,\,j\neq k
\end{equation}
\end{itemize} 
\end{definition}
In the context of an event coupling the $e_i$ are called \emph{transition mappings}.  The first condition is so that the correct transitions occur, and at the correct rate.  The second condition is so that nothing else happens.  The third condition is to ensure that we respect the joint distribution of distinct transitions in the process.\\

It follows from the condition in \eqref{ecc} that coupled transitions must occur on regions $T_j$ that intersect in at least one site, since if $T_j \cap T_k = \emptyset$ there are configurations $\eta$ satisfying both $\eta|_{T_j} = \phi_j$ and $\eta|_{T_k} = \phi_k$.  Using this fact and a topological assumption on $S$ we can get a bit more control on event couplings.\\

\begin{proposition}
Suppose that $S$ is the set of vertices of an undirected graph with bounded degree and that transitions $c_T(\phi,\psi)$
\begin{itemize}
\item satisfy the conditions of Lemma \ref{bp} and
\item occur only on connected subsets $T$
\end{itemize}
Then in any event coupling, each mapping is associated to at most a uniformly bounded number of distinct transitions.
\end{proposition}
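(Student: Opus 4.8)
The plan is to fix a single transition mapping $e_i$, list the distinct transitions $c_{T_j}(\phi_j,\psi_j)$, $j=1,\dots,m$, assigned to it, and bound $m$ by a constant depending only on the maximum degree $\Delta$ of the graph, the constant $M$ from Lemma~\ref{bp}, and the number of types $|F|$. Since each transition is nothing more than a triple $(T_j,\phi_j,\psi_j)$ with $\phi_j,\psi_j\in F^{T_j}$, it will suffice to exhibit a single set $U\subseteq S$, of bounded cardinality, that contains every region $T_j$: the number of such triples is then at most $\sum_{T\subseteq U}|F|^{2|T|}=(1+|F|^2)^{|U|}$, and a bound on $|U|$ depending only on $\Delta$ and $M$ yields the claim.

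First I would record the geometric consequence of \eqref{ecc} already noted in the text. If two of the regions were disjoint, say $T_j\cap T_k=\emptyset$, then $\phi_j$ and $\phi_k$ would admit a common extension $\eta$ with $\eta|_{T_j}=\phi_j$ and $\eta|_{T_k}=\phi_k$, so the two cylinder sets in \eqref{ecc} would intersect, a contradiction. Hence the regions $T_1,\dots,T_m$ pairwise intersect; in particular, every $T_j$ contains a site belonging to $T_1$.

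Next I would bring in the two standing hypotheses to convert pairwise intersection into containment in a bounded neighbourhood. By the conditions of Lemma~\ref{bp} each region satisfies $|T_j|\le M$, and by assumption each $T_j$ is connected, hence has graph-diameter at most $M-1$. Since $T_j$ meets $T_1$, every site of $T_j$ lies within graph-distance $M-1$ of a site of $T_1$, so $\bigcup_j T_j\subseteq U:=\{x\in S:d(x,T_1)\le M-1\}$. Because the graph has bounded degree $\Delta$, the number of sites within distance $M-1$ of any fixed site is at most $\kappa:=1+\Delta+\Delta(\Delta-1)+\dots+\Delta(\Delta-1)^{M-2}$, a constant depending only on $\Delta$ and $M$; as $|T_1|\le M$, this gives $|U|\le M\kappa$. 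Substituting into the count above yields $m\le(1+|F|^2)^{M\kappa}$, which is independent of $i$ and of the particular coupling, as required.

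The step requiring the most care is the passage from pairwise intersection to a common bounded region, and it is precisely here that both hypotheses are used in an essential way: without connectedness a region of size $\le M$ meeting $T_1$ could have sites arbitrarily far from $T_1$, and without bounded degree even the fixed-radius neighbourhood $U$ of $T_1$ could be infinite. Beyond assembling these observations I do not anticipate a genuine obstacle; the final counting step is routine once $U$ has been shown to be bounded.
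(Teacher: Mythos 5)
Your proof is correct and follows essentially the same route as the paper's: condition \eqref{ecc} forces every coupled region to meet $T_1$, connectedness plus the size bound $|T_j|\le M$ confines all regions to a bounded-radius neighbourhood of $T_1$, and bounded degree makes that neighbourhood finite of uniformly bounded size. The only difference is that you spell out the final count of triples $(T,\phi,\psi)$ explicitly, which the paper leaves as an assertion.
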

\begin{proof}
From the overlap condition described above, it follows from connectedness that each mapping can be assigned to at most finitely many transitions $c_{T_j}(\phi_j,\psi_j)$, in particular those involving sites within graph distance $M$ of $T_1$.  If $d+1$ is the degree of the graph (and $M\geq 2$, say), then there are at most $M(d^{M+1}-1)/(d-1)$ such sites (and thus a uniformly bounded number of distinct possible transitions involving these sites), since a rooted tree with branching number $d$ and depth $M$ has $1+d+...+d^M$ vertices.\\
\end{proof}
We note that from any valid event coupling we obtain a coupling of realizations of our process starting from any initial configuration.  In certain cases, some of these couplings may be more useful than others, for example if the resulting evolution of the system respects a certain natural partial order on the states, as we now describe.\\

\section{Attractiveness and additivity}\label{secattr}
Suppose the set of types $F$ is a partially ordered set, that is, we have a reflexive transitive relation $\leq$ defined for certain pairs $a,b \in F$.  From this we obtain a partial order on local (or global) configurations given by $\phi \leq \phi' \Leftrightarrow \phi(x)\leq\phi'(x)$ for each $x \in T$ ($\eta\leq\zeta \Leftrightarrow \eta(x)\leq\zeta(x)$ for each $x\in S$).

\begin{definition}
A mapping $e:F^T\rightarrow F^T$ is \emph{attractive} if $\phi\leq\phi' \Rightarrow e(\phi)\leq e(\phi')$.  An event coupling is attractive if each of its mappings is attractive.  A process is attractive if it has an attractive event coupling.
\end{definition}
Note that attractiveness of a process is equivalent to the existence of an event coupling so that $\eta_0\leq\eta_0' \Rightarrow \eta_t\leq\eta_t'$ for all $t>0$.\\

The historical reason for the word ``attractive'' is given in \cite{ips}, Chapter 2, Section 2; in that context, the ``spins'' at adjacent sites tend to align, which means that the spin state at one site tends to be attracted to the spin state at adjacent sites.\\

In some cases $F$ is equipped with a \emph{join} operation $\vee$, which is a symmetric binary operation defined for all pairs $a,b\in F$ that satisfies $a\vee b \geq a,b$ and $c\geq a,b \Rightarrow c\geq a\vee b$.\\

\begin{definition}\label{adddef}
A mapping $e:F^T\rightarrow F^T$ is \emph{additive} if $e(\phi\vee\phi') = e(\phi)\vee e(\phi')$.  An event coupling is additive if each of its mappings is additive, and a process is additive if it has an additive event coupling.
\end{definition}
Note that additivity of a process is equivalent to the existence of an event coupling so that $\eta_t = \eta_t'\vee\eta_t''$ whenever $\eta_0 = \eta_0'\vee\eta_0''$.\\

If $a$ and $b$ are \emph{comparable} i.e., $a\leq b$ or $b \leq a$, then $a\vee b$ is just $\max (a,b)$.  From this it follows that if $\eta_0\leq\eta_0' = \eta_0'$, then $\eta_0\vee\eta_0' = \eta_0'$ and so $\eta_t \vee\eta_t' = \eta_t'$ and in particular, $\eta_t \leq \eta_t'$.  Thus additivity implies attractiveness.\\

\section{Growth models and duality}\label{secgrowth}
In order to say more about additivity, we focus on the following class of systems, which are natural choices for either population growth or disease spread.  For what follows let $\underline{0}$ denote the configuration with all $0$'s.\\

\begin{definition}
A \emph{growth model} is a multi-type particle system with a distinguished \emph{passive type} $0$ and \emph{active types} $\{1,...,N\}$ satisfying
\begin{itemize}
\item $0 \leq a$ for each $a\in F$,
\item $\underline{0}$ is absorbing, and
\item $\underline{0}$ is reachable from any configuration with only finitely many active sites.
\end{itemize}
\end{definition}
If we think of the active types as representing organisms in various stages of development this means that vacancy is the ``lowest'' type, that no spontaneous birth occurs, and that a finite population has a chance of dying out.  For a population model, unoccupied territory is the passive type, and for a disease model in which each site represents a particular individual, the passive type might be a healthy individual.\\

For a growth model we are interested in questions of \emph{survival}, that is, whether the process started from a finite number of active sites has a chance of avoiding the $\underline{0}$ state for all time.\\

We will show that each additive growth model has an additive counterpart going backwards in time; the first step is to define its configurations.

\begin{definition}
For a growth model with types $F$ and join $\vee$, a set of active types $E\subseteq F$ is
\begin{itemize}
\item \emph{increasing} if $a \in E, a \leq b \Rightarrow b \in E$, and is
\item \emph{decomposable} if $a\vee b \in E$ implies $a \in E$ or $b \in E$.
\end{itemize}
The \emph{dual types} $\tilde{F}$ are the set of increasing and decomposable sets of active types $E$, together with a passive type $0$, with the partial order $0 \leq E$ for each $E \in \tilde{F}$ and $E \leq E'$ if $E \subseteq E'$.
\end{definition}

A partially ordered set of dual configurations $\tilde{F}^S$ is defined in the same way as for $F^S$.  From the following proposition we see that the dual types come equipped with a join.\\
\begin{proposition}
The set of dual types has a join $\vee$ given by $E\vee E' = E\cup E'$.
\end{proposition}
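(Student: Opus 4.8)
The plan is to verify that $E \cup E'$ satisfies the two defining requirements of a join on the poset $\tilde{F}$: that it is an upper bound for both $E$ and $E'$, and that it is the least such upper bound. Since the partial order on $\tilde{F}$ is set inclusion (with $0$ as bottom element, which I would identify with the empty set $\emptyset$, itself vacuously increasing and decomposable), and since union is visibly a symmetric operation defined for every pair, these two order requirements are immediate consequences of the elementary fact that union is the least upper bound in the lattice of subsets. Concretely, $E \subseteq E \cup E'$ and $E' \subseteq E \cup E'$ give $E, E' \leq E \cup E'$; and if $G \in \tilde{F}$ satisfies $E \subseteq G$ and $E' \subseteq G$, then $E \cup E' \subseteq G$, so $G$ dominates the union. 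Under the identification $0 = \emptyset$ the passive type is handled uniformly, since $\emptyset \cup E = E = 0 \vee E$.

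The one genuine step is to check that $E \cup E'$ actually lies in $\tilde{F}$, that is, that the union of two increasing decomposable sets of active types is again increasing and decomposable; without this the formula would not even name an element of the poset. For the increasing property I would take $a \in E \cup E'$ with $a \leq b$; then $a$ lies in $E$ or in $E'$, and since each of these is increasing, $b$ lies in the same set, hence in $E \cup E'$. For decomposability I would take $a \vee b \in E \cup E'$; then $a \vee b$ lies in $E$ or in $E'$, and applying decomposability of whichever set contains it yields $a$ or $b$ in that set, hence in $E \cup E'$.

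I do not anticipate a real obstacle here, since both closure properties reduce to applying the defining property of $E$ (or of $E'$) to the one of the two sets that contains the relevant element. The only point requiring a moment's care is the treatment of the bottom element $0$, which I would resolve once and for all by identifying it with $\emptyset$; after that the three verifications — containment of $E$ and $E'$, minimality among upper bounds, and closure of $\tilde{F}$ under union — all go through directly, and together they establish that $E \vee E' = E \cup E'$ is a well-defined join on $\tilde{F}$.
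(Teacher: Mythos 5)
Your proof is correct and follows essentially the same route as the paper's: verify that the union of increasing sets is increasing and the union of decomposable sets is decomposable (so that $E \cup E'$ is indeed a dual type), then observe that union is the least upper bound under inclusion. Your explicit handling of the passive type $0$ via the identification with $\emptyset$ is a small addition the paper leaves implicit, but it changes nothing of substance.
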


\begin{proof}
If $E$ and $E'$ are increasing and $a \in E\cup E'$ then $b\geq a$ implies $b \in E\cup E'$, so $E\cup E'$ is increasing, and that if $E$ and $E'$ are decomposable and $b \vee c \in E\cup E'$ then $b \in E\cup E'$ or $c \in E\cup E'$, so $E\cup E'$ is decomposable; therefore, $E\cup E' \in \tilde{F}$ whenever $E,E' \in \tilde{F}$.  If $E'' \geq E,E'$ then $E \supseteq E,E'$ so $E\supseteq E\cup E'$, moreover $E\cup E' \geq E,E'$.\\
\end{proof}

In order to relate configurations in $F^S$ with dual configurations in $\tilde{F}^S$ we introduce the following compatibility relation.

\begin{definition}
A configuration $\eta \in F^S$ is \emph{compatible} with a dual configuration $\zeta \in \tilde{F}^S$, denoted $\eta \sim \zeta$, if for some $x$, $\eta(x)$ is an active type and $\eta(x) \in \zeta(x)$.  For local configurations $\phi \in F^T, \theta \in \tilde{F}^T$ this reads
\begin{equation*}
\phi \sim \theta \Leftrightarrow \phi(x) \in \theta(x)\neq 0 \textrm{ for some }x \in T
\end{equation*}
\end{definition}

When the $\neq 0$ is understood it is usually omitted.  With the compatibility relation comes the natural identification $\zeta \leftrightarrow \{\eta:\eta \sim \zeta\}$ (or, for local configurations, $\theta \leftrightarrow \{\phi:\phi \sim \theta\}$) of dual configurations with the set of configurations with which they are compatible.\\

A growth model with configurations in $F^S$ has a \emph{dual} if there is an event coupling for the model and a multi-type particle system with configurations $\tilde{F}^S$ so that in the event construction for the process, the following \emph{duality relation} holds:
\begin{equation*}
\eta_t \sim \zeta_0 \Leftrightarrow \eta_0 \sim \zeta_t
\end{equation*}
where the process $\eta_s$, $0\leq s \leq t$ is run forward in time on the spacetime event map from $0$ to $t$, and the dual $\zeta_s$, $0\leq s \leq t$, is run backward in time on the spacetime event map from $t$ to $0$ (so that $\zeta_s$ is on the timeline $t-s$), using the dual mappings given by
\begin{equation*}
\tilde{e}(\theta) = \{\phi \in F^T:e(\phi) \sim \theta\}
\end{equation*}
Note the identification $\theta \leftrightarrow \{\phi:\phi\sim\theta\}$ is assumed on the right-hand side of the equation.\\

\begin{remark}
Duality for particle systems (the use of relations pairing particle systems with other systems going backwards in time) comes in more than one form; the form described here is known as coalescing duality in Chapter III of \cite{ips}.  An application of this form of duality can be found in \cite{growth}, where it is used to help prove complete convergence of the growth models described in that paper.  More general forms of duality exist; see for example the paper \cite{dualfam} in which a form of duality is used for nonattractive contact processes.
\end{remark}

Returning to the task at hand (and to the definition of duality just given), we now prove that every additive growth model has an additive dual.

\begin{proof}[Proof of Theorem \ref{thmdual}]
Given the additive coupling of $\eta_s$, $0\leq s \leq t$, for all possible $\eta_0$, and given the dual configuration $\zeta_0$, set $\Lambda_0 = \{\eta:\eta \sim \zeta_0\}$ and 
\begin{eqnarray*}
\Lambda_s &=& \{\eta_{t-s}:\eta_t \sim \zeta_0\}\\
\Lambda_{s^+} &=& \{\eta_{t-s^+}:\eta_t\sim\zeta_0\}
\end{eqnarray*}
where $\eta_{t-s^+}$ is the configuration just preceding an event, if an event occurs at time $s$.  Suppose for the moment that $|S|<\infty$ so that events happen one at a time.  In the case $|S|=\infty$, we suppose the hypothesis of Lemma \ref{bp} are satisfied.  Then, for any finite $T\subset S$ and $t>0$, run the process on an increasing sequence of sets $T_i$ with $\cup_i T_i=S$, that is, for each $i$, construct the process using only the events that occur on $T_i$.  Then from the conclusion of Lemma \ref{bp} there exists an almost surely finite $i_0$ such that if $i\geq i_0$, then $\eta_s|_T$, $0\leq s \leq t$, run on $T_i$, is equal to $\eta_s|_T$, $0\leq s \leq t$, for the process itself.  The duality relation then follows from the proof in the finite case.\\

Say that a collection of configurations $\Lambda \subset F^S$ is \emph{dualizable} if there is a dual configuration $\zeta$, called the minimal configuration, so that
\begin{equation*}
\Lambda = \{\eta:\eta\sim\zeta\}
\end{equation*}
If such a $\zeta$ exists it is unique.  It is immediate that $\Lambda_0$ is dualizable.  Suppose that $\Lambda_s$ is dualizable and that there is an event at time $s$.  We have $\Lambda_{s^+} = \{\eta_{t-s^+}:\eta_{t-s} \in \Lambda_s\}$, so we examine the mapping $e:F^T\rightarrow F^T$ that corresponds to the event at time $s$.  Let $\zeta$ be the minimal configuration for $\Lambda_s$ and let $\theta$ be its restriction to $T$.  Then $\eta \in \Lambda_{s^+}$ if and only if $e(\phi) \sim \theta$ where $\phi$ is the restriction of $\eta$ to $T$.  Equivalently,
\begin{equation*}
\eta \in \Lambda_{s^+} \Leftrightarrow e(\phi)(y) \in \theta(y) \textrm{ for some } y\in T
\end{equation*}
For each $y\in T$ we have by additivity that $e(\phi)(y) = \vee_x e(\phi_x)(y)$ where $\phi_x(x) = \phi(x)$ and $\phi_x(y)=0$ if $y\neq x$.  Since $\theta(x)$ is decomposable it then follows that
\begin{equation*}
e(\phi)(y) \in \theta(y) \Leftrightarrow e(\phi_x)(y) \in \theta(y) \textrm{ for some }x \in T
\end{equation*}
We then set $\delta_x(a)$ to be the local configuration $\phi$ with $\phi(x)=a$ and $\phi(y)=0$ for $y\neq x$ and since
\begin{equation*}
e(\delta_x(a\vee b)) = e(\delta_x(a)) \vee e(\delta_x(b))
\end{equation*}
it follows that if $e(\delta_x(a\vee b))(y) \in \theta(y)$, then either $e(\delta_x(a))(y) \in \theta(y)$ or $e(\delta_x(b))(y) \in \theta(y)$, since $\theta(x)$ is decomposable.  By attractiveness and since $\theta(x)$ is increasing, if $\phi\leq\phi'$ and $e(\phi)(y)\in\theta(y)$ then $e(\phi')(y) \in \theta(y)$.  It follows that for each $x,y \in T$ the set
\begin{equation*}
\theta_{+,y}(x) = \{a \in F:e(\delta_x(a))(y) \in \theta(y)\}
\end{equation*}
is increasing and decomposable, thus is a dual type.  Thus, setting
\begin{equation*}
\zeta_{s^+}(x) = \{a \in F:\delta_x(a) \in \theta(y) \textrm{ for some }y \in T\}
\end{equation*}
when $x \in T$ (and $\zeta_{s^+}(x) = \zeta_s(x)$ when $x \notin T$) we observe that $\zeta_{s^+}(x) = \cup_y \theta_{+,y}(x)$ is a dual type, since dual types are closed under unions, and  moreover that
\begin{equation*}
\eta \in \Lambda_{s^+} \Leftrightarrow \eta(x) \in \zeta_{s^+}(x) \textrm{ for some } x
\end{equation*}
or in other words, $\Lambda_{s^+}$ is dualizable, and $\zeta_{s^+}$ is its minimal configuration.  This completes the induction step and shows that an additive process has a dual which is defined in the manner just described.  Dual mappings $\tilde{e}:\tilde{F}^T\rightarrow\tilde{F}^T$ can be given as
\begin{equation*}
\tilde{e}(\theta)(x) = \{a \in F:e(\delta_x(a)) \sim \theta\}
\end{equation*}
with the property that $e(\phi)\sim\theta \Leftrightarrow \phi\sim\tilde{e}(\theta)$.  We observe that $\phi \sim \tilde{e}(\theta\vee\theta') \Leftrightarrow e(\phi)\sim\theta \vee \theta' \Leftrightarrow e(\phi)(x) \in \theta(x) \cup \theta(x) \textrm{ for some }x \Leftrightarrow e(\phi)\sim \theta \textrm{ or }e(\phi)\sim \theta' \Leftrightarrow \phi\sim\tilde{e}(\theta) \textrm{ or }\phi\sim\tilde{e}(\theta') \Leftrightarrow \phi\sim\tilde{e}(\theta)\vee\tilde{e}(\theta')$, which shows that the dual is additive.  By definition of the dual types and the duality relation, it follows that the dual is a growth model.\\
\end{proof}
In order to push the duality relation backwards through an event it was necessary to decouple the interactions in the forward process, first by sites, then by type.  It was in order to decouple by type that we required dual types be decomposable.  In general, considerable complexity can arise, however in the following simple case we can easily identify the dual.\\

\begin{example}[An N-stage contact process]\label{ex1}
Take $F = \{0,1,...,N\}$ with primitive types $1,...,N$ totally ordered in the usual ordering of integers.  Let $G$ be an undirected graph, then define a process on $G$ using the transitions
\begin{itemize}
\item $i \rightarrow i+1$ at rate $\gamma$, for $i=1,...,N-1$, 
\item $i \rightarrow 0$ at rate $1$, for $i\geq 1$, and
\item $0 \rightarrow 1$ at rate $\lambda$ times the number of neighbours in state $N$.
\end{itemize}
The set of dual types is $0$ and $E_j = \{i \in F:i \geq j\}$, for $j=1,...,N$, with $E_j \subset E_k$ for $j>k$.  It is not hard to check that the dual transitions are
\begin{itemize}
\item $E_{i+1} \rightarrow E_i$ at rate $\gamma$, for $i=1,...,N-1$, 
\item $E_i \rightarrow 0$ at rate $1$, for $i\geq 1$, and
\item $0 \rightarrow E_N$ at rate $\lambda$ times number of neighbours in state $E_1$.
\end{itemize}
Introducing the correspondence $E_j \leftrightarrow N+1-j$, the set of dual types becomes $\{0,1,...,N\}$ with the usual order, and the transitions are identical to those in the original process.  Thus we see that this process is \emph{self-dual}.
\end{example}

\section{Primitivity and colour decomposition}\label{secprim}
The goal of this section is to prove Theorem \ref{thmadd}.  We will make our way towards multi-colour systems in a few steps.  First we need a couple of definitions.\\

\begin{definition}
For a growth model with a join, an active type $a \in F$ is \emph{primitive} if $a = b \vee c \Rightarrow a=b \textrm{ or }a=c$, and is \emph{compound} otherwise.  The primitive types are denoted $F_p$.
\end{definition}
It is possible to have $a,b \in F_p$ with $a<b$; this is true for example when $F$ is totally ordered, since in that case all active types are primitive.  We say that $a,b \in F$ are \emph{incomparable} or $a<>b$ if $a \nleq b$ and $b \nleq a$, which is the opposite of being comparable.\\

\begin{definition}
A set $C \subset F_p$ satisfying $a=b$ or $a<>b$ for $a,b \in C$ is called a \emph{colour combination}.  We denote by $\mathcal{C}$ the set of colour combinations.
\end{definition}
Notice that $\mathcal{C}$ is partially ordered by $C \leq C' \Leftrightarrow$ for each $a \in C$, there exists $b \in C'$ such that $b \geq a$.  There is also a join defined by
\begin{equation*}
C\vee C' = \{a \in C\cup C':a \not < b\textrm{ for any }b \in C\cup C'\}
\end{equation*}
\begin{definition}
For the set of types $F$ of a growth model with a join, the \emph{multi-colour expansion} $F_*$ is the set $\mathcal{C}$ of colour combinations of $F$, together with a passive type $0$ satisfying $0 \leq a$ for each $a \in F_*$.
\end{definition}
From the definition of the join on $\mathcal{C}$ it follows that the sets $\{a\}$, for $a \in F_p$, are the primitive types in $F_*$.\\

There is a mapping $\pi:F_*\rightarrow F$ defined by $\pi(0)=0$ and
\begin{equation*}
\pi:C \mapsto \bigvee_{a \in C}a
\end{equation*}
for $C \in \mathcal{C}$.  Notice that for $b \in F$, $\pi^{-1}(b) = \{C \in \mathcal{C}:b = \bigvee_{a \in C}a\}$.  If $\pi(C)=a$ we say that $C$ is a \emph{decomposition} of $a$, and if $a$ has a unique decomposition we use $C(a)$ to denote it.  Notice that $C(a\vee b) = C(a)\vee C(b)$, if they exist.
\begin{proposition}\label{pm}
The mapping $\pi:F_*\rightarrow F$ has the following properties:
\begin{itemize}
\item $a\leq b \Rightarrow \pi(a)\leq \pi(b)$.
\item $\pi(a\vee b) = \pi(a)\vee \pi(b)$.
\item $\pi$ is surjective.
\item $\pi^{-1}(a) = \{a\}$ for each $a \in F_p$.
\end{itemize}
\end{proposition}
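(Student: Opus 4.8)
The four claims are essentially formal consequences of the definitions of the partial order and join on $\mathcal{C}$, together with the finiteness of $F$. The plan is to dispatch them roughly in the stated order, isolating two elementary observations that get reused. The first is that, because $F$ is finite, every element of a finite subset $A\subseteq F$ lies below some maximal element of $A$; letting $\max A$ denote the set of maximal elements of $A$, this gives $\bigvee_{x\in A}x=\bigvee_{x\in\max A}x$. The second is that every active type is a join of primitive types, which I would prove by induction on the poset $F$: any compound $b$ can be written as $b=c\vee d$ with $c,d<b$, and the inductive hypothesis applies to $c$ and $d$.

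For monotonicity, the cases involving the passive type are immediate from $\pi(0)=0$ and $0\leq\pi(b)$, so suppose $a=C$ and $b=C'$ are colour combinations with $C\leq C'$. By definition of the order on $\mathcal{C}$, each $x\in C$ satisfies $x\leq y$ for some $y\in C'$, hence $x\leq\pi(C')$; thus $\pi(C')$ is an upper bound for $C$, and by the defining property of the join $\pi(C)=\bigvee_{x\in C}x\leq\pi(C')$. For join-preservation, again the passive cases are trivial (as $0\vee b=b$), and for colour combinations $C\vee C'=\max(C\cup C')$ by definition, so the first observation gives $\pi(C\vee C')=\bigvee_{x\in\max(C\cup C')}x=\bigvee_{x\in C\cup C'}x=\pi(C)\vee\pi(C')$.

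Surjectivity follows from the second observation: the passive type is the image of $0$, and given an active $b$, I write $b$ as a join of primitives and set $C=\max\{a\in F_p:a\leq b\}$, the set of maximal primitives below $b$. Then $C$ is an antichain of primitives, hence a colour combination, and since every primitive appearing in a join-representation of $b$ lies below some element of $C$, the first observation yields $\pi(C)=\bigvee_{x\in C}x=b$.

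The real work is in the last claim, $\pi^{-1}(a)=\{a\}$ for $a\in F_p$. Since $\pi(0)=0\neq a$, any preimage is a colour combination $C$ with $\bigvee_{x\in C}x=a$, and I must show $C=\{a\}$. The key step is to show $a\in C$: writing $a=x\vee\bigvee(C\setminus\{x\})$ for an arbitrary $x\in C$ and using primitivity of $a$, either $a=x\in C$, or else $a=\bigvee(C\setminus\{x\})$, in which case I induct on $|C|$, since $C\setminus\{x\}$ is again an antichain of primitives whose join is the primitive $a$, giving $a\in C\setminus\{x\}\subseteq C$. Once $a\in C$, any other $y\in C$ satisfies $y\leq\bigvee C=a$, and as $C$ is an antichain containing $a$ this forces $y=a$, whence $C=\{a\}$. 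I expect this inductive teardown of the join — the place where primitivity is genuinely used to peel off one generator at a time — to be the main obstacle, with the antichain condition being exactly what is needed to conclude once $a$ has been located inside $C$.
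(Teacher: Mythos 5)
Your proof is correct and follows essentially the same route as the paper's: surjectivity by decomposing compound types into joins of primitives, and the last item by using primitivity to peel the join apart and conclude the decomposition is a singleton. You simply supply more detail than the paper, which dismisses the first two items as clear and states the remaining steps tersely.
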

\begin{proof}
The first and second assertions are clear.  To see that $\pi$ is surjective, first note that $F$ can be partitioned into $\{0\}$, primitive types, and compound types.  By definition, $\pi(0)=0$.  If $a \in F_p$, the set $\{a\}$ belongs to $\mathcal{C}$ and $\pi(\{a\}) = a$.  Finally, if $b \in F$ is compound, then by repeatedly breaking up joins, we see that $b$ can be written as
\begin{equation*}
b = \bigvee_{a \in C}a
\end{equation*}
for some $C \in \mathcal{C}$, and $\pi(C)=b$.\\

For each $a \in F_p$, from primitivity it follows that $a$ has no decomposition containing more than element, moreover $\pi(\{c\})\neq a$ if $c \neq a$.  Since $\pi(\{a\}) = a$ it follows that $\pi^{-1}(a)=\{a\}$.
\end{proof}
For future use we note that $\pi$ can be defined on configurations by letting $\pi(\eta)(x) = \pi(\eta(x))$ for each $x$.\\

From Proposition \ref{pm} it follows that
\begin{itemize}
\item each primitive type has exactly one decomposition,
\item each compound type has at least one decomposition, and
\item primitive types in $F$ \emph{lift} to primitive types in $F_*$ in the sense that
\begin{enumerate}
\item the sets $\{a\}$ are the primitive types in $F_*$,
\item $\pi^{-1}(a)=a$ for each $a \in F_p$ and
\item the order is preserved under $\pi^{-1}$
\end{enumerate}
\end{itemize}
From the lifting property we see that the sets of primitive types $F_{*p}$ and $F_p$ are isomorphic, $\pi$ being a natural isomorphism between them.\\

\begin{remark}
A compound type in $F$ can have more than one decomposition and thus more than one preimage under $\pi$, as in the case $F = \{0,1,2,3,4\}$ with $0<1,2,3<4$, so that $F_p = \{1,2,3\}$ and $4 = 1\vee 2 = 1\vee 3 = 2\vee 3= 1\vee 2\vee 3$.  Here $F_*$ can be labelled $\{0,1,2,3,4,5,6,7\}$ with $\pi(a)=a$ and $\pi^{-1}(a)=a$ for $a=0,1,2,3$, $4=1 \vee 2$, $5=1 \vee 3$, $6=2\vee 3$ and $7 = 1 \vee 2 \vee 3$, and with $\pi^{-1}(4) = \{4,5,6,7\}$.  If we add $1<2$ to the order on $F$ then $F_p$ is still $\{1,2,3\}$ but $4 = 1\vee 3 = 2\vee 3$ only (in the sense of colour combinations).  In this case $F_*$ can be labelled $\{0,1,2,3,4,5\}$ with $\pi(a)=a$ and $\pi^{-1}(a)=a$ for $a = 0,1,2,3$, $4=1\vee 3$ and $5=2\vee 3$, and with $\pi^{-1}(4) = \{4,5\}$.
\end{remark}

\begin{definition}\label{defmc}
A set of types $F$ with a join is \emph{multi-colour} if every compound type has exactly one decomposition.  If $F$ is multi-colour and $b \in F$, $C(b)$ denotes its decomposition.
\end{definition}
For any set of types $F$ with a join, its multi-colour expansion $F_*$ is multi-colour.  From Proposition \ref{pm}, $F$ being multi-colour is equivalent to $\pi:F_*\rightarrow F$ being injective, which is equivalent to $F_*$ and $F$ being isomorphic, $\pi$ being a natural isomorphism between them.  Thus for a multi-colour system there is a $1:1$ correspondence between the set of active types and the set of colour combinations.\\

Before proving Theorem \ref{thmlift}, we consider the following simple example of a multi-type system and its lift.\\

\begin{example}[A 3-type system]
Take the model with $F = \{0,1,2,3,4\}$ with incomparable primitive types $F_p = \{1,2,3\}$ and $4 = 1\vee 2 = 1\vee 3=2 \vee 3 = 1\vee 2 \vee 3$.  Each non-zero type dies at rate $1$, and produces an individual of type $1,2$ or $3$ at rate $\lambda/3$.  Then the lift has types $F_* = \{0,1,2,3,1\vee 2, 1\vee 3,2\vee 3,1\vee 2 \vee 3\}$, and the transitions are the same:  each non-zero type dies at rate $1$, and produces an individual of type $1,2$ or $3$ at rate $\lambda/3$.  The only difference is that if, for example, a $2$ lands on a site occupied by a $3$, then in the lift the resulting type is $2\vee 3$ instead of just $4$; in the lift we distinguish the various joins of primitive types.
\end{example}

We now prove Theorem \ref{thmlift}.  For the proof, define the \emph{minimal} elements $\min(E)$ of a set $E$ to be the elements $a \in E$ such that $a \not >b$ for any $b \in E$.  Each set $E$ has a \emph{layer partition} $E_1,E_2,...,E_k$ given by $E_1 = \min(E)$ and $E_i = \min(E - E_{i-1})$ for $i>1$.\\

\begin{proof}[Proof of Theorem \ref{thmlift}]
We take transition mappings $e:F^T\rightarrow F^T$ to new mappings
\begin{equation*}
e_*:F_*^T\rightarrow F_*^T
\end{equation*}
Having done this, $\xi_t$ is constructed using the transitions $e_*$.  To ensure that $\pi(x_0)=\eta_0 \Rightarrow \pi(\xi_t)=\eta_t$ for $t>0$ it suffices to show that $\pi\circ e_* = e\circ \pi$.  Since $F_{*p}$ and $F_p$ are isomorphic we use $a$ to denote both $a \in F_{*p}$ and $\pi(a) \in F_p$.\\

Partition $F_p$ into layers $F_{p1},...,F_{pk}$.  For a set $B$ let $\vee B$ denote $\vee_{b \in B}b$.  For $a \in F_1$ and $x \in T$ let
\begin{equation*}
e_*(\delta_x(a)) = \vee\pi^{-1}(e(\delta_x(a)))
\end{equation*}
Since for $a,b \in F_*$, $\pi(a)=\pi(b) \Rightarrow \pi(a\vee b)=\pi(a)\vee \pi(b) = \pi(b)$, it follows that $\pi(e_*(\delta_x(a)) = e(\delta_x(a))$.  To extend $e_*$ to other layers in an additive way we first make the following observations.  If $a < a'$ then $a' = a \vee a'$ and
\begin{equation*}
e(\delta_x(a')) = e(\delta_x(a\vee a')) = e(\delta_x(a)\vee\delta_x(a')) = e(\delta_x(a))\vee e(\delta_x(a'))
\end{equation*}
by additivity of $e$.  If $\phi' = \phi\vee\phi'$ and $\psi \in \pi^{-1}(\phi)$, $\psi' \in \pi^{-1}\phi'$ then 
\begin{equation*}
\pi(\psi\vee\psi')=\pi(\psi)\vee\pi(\psi')=\phi\vee\phi' = \phi'
\end{equation*}
Thus, for $i>1$ suppose $e_*$ is defined on $\delta_x(a)$ for each $x \in T$, $a \in F_{pj}$ and $j<i$.  Then let
\begin{equation*}
e_*(\delta_x(a')) = \vee\pi^{-1}(e(\delta_x(a'))) \vee \bigvee\{e_*(\delta_x(a)):a<a',a \in F_{pj},j<i\}
\end{equation*}
It follows from the last observation that $\pi(e_*(\delta_x(a')))=e(\delta_x(a'))$, moreover $e_*(\delta_x(a)\vee\delta_x(a')) = e_*(\delta_x(a'))$ for any $a<a',a \in F_{pj}$ for some $j<i$.  Doing this for each $i$, we find that $e_*(\delta_x(a))$ is defined for each $x \in F^T$ and each $a \in F_p$, and is additive on that set.\\

For each compound type $b \in F_*$ and $x \in T$, let
\begin{equation*}
e_*(\delta_x(b))=\bigvee\{e_*(\delta_x(a)):a \in C(b)\}
\end{equation*}
Finally, for arbitrary $\phi \in F_*^T$ let $e_*(\phi) = \bigvee_x e_*(\phi_x)$ where $\phi_x(x) = \phi(x)$ and $\phi_x(y)=0$ for $y\neq x$.  Both extensions preserves additivity, and so $e_*$ is defined and additive on all of $F_*^T$.\\

To check that $\pi(e_*(\phi)) = e(\pi(\phi))$ observe that $\pi(e_*(\delta_x(a))) = e(\pi(\delta_x(a)))$ for $a \in F_p$ and so
\begin{eqnarray*}
\pi(e_*(\phi)) &=& \pi(\vee e_*(\delta_x(a)))\\
&=& \vee\pi(e_*(\delta_x(a)))\\
&=& \vee e(\pi(\delta_x(a)))\\
&=& e(\pi(\vee\delta_x(a)))\\
&=& e(\pi(\phi))
\end{eqnarray*}
where the $\vee$ is over $x\in T$ and $a \in C(\phi(x))$.
\end{proof}

\begin{remark}
If $\xi_t$ is the lift of $\eta_t$ and $\eta_t = \underline{0}$ then $\xi_t = \underline{0}$, since the configuration with all $0$'s has only itself as a preimage under $\pi$.  Therefore $\eta_t$ survives, i.e., $\eta_t \neq \underline{0}$ for all $t>0$, if and only if $\xi_t$ survives.  For this reason, when studying questions of survival (which are usually the interesting questions for growth models), it is sufficient to restrict our attention to multi-colour systems.
\end{remark}

Having lifted to a multi-colour system, we now consider duality in that context, for which the following fact is helpful.
\begin{lemma}\label{mcpd}
If $F$ is multi-colour and $a \in F$ is primitive then
\begin{equation*}
a \leq b\vee c \Rightarrow a \leq b \textrm{ or } a \leq c
\end{equation*}
\end{lemma}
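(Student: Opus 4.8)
The plan is to transport the statement to the colour-combination description of the types, where unique decomposition does all the work. Since $F$ is multi-colour, Definition \ref{defmc} together with Proposition \ref{pm} tells us that $\pi:F_*\to F$ is a bijection preserving the join, hence a semilattice isomorphism; because $\leq$ is recovered from $\vee$ (via $x\leq y\Leftrightarrow x\vee y=y$), both $\pi$ and $\pi^{-1}$ preserve and reflect the order. Consequently every type has a unique decomposition $C(b)\in\mathcal C$ with $b=\bigvee_{p\in C(b)}p$, and $x\leq y$ in $F$ is equivalent to $C(x)\leq C(y)$ in $\mathcal C$. The first thing I would record is that a primitive $a$ satisfies $C(a)=\{a\}$, which is immediate from $\pi^{-1}(a)=\{a\}$ in Proposition \ref{pm}.

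Next I would rewrite the hypothesis using $C(b\vee c)=C(b)\vee C(c)$, valid here since all decompositions exist. Applying $\pi^{-1}$ (which preserves the join) to $a\leq b\vee c$ turns it into $\{a\}\leq C(b)\vee C(c)$ in $\mathcal C$. Unwinding the definition of the order on $\mathcal C$ for a left-hand singleton, this says precisely that there is some $d\in C(b)\vee C(c)$ with $d\geq a$.

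The crux is then the elementary containment $C(b)\vee C(c)\subseteq C(b)\cup C(c)$, read straight off the defining formula $C\vee C'=\{x\in C\cup C':x\not< y\text{ for any }y\in C\cup C'\}$. Thus the element $d$ lies in $C(b)$ or in $C(c)$; say $d\in C(b)$. Since $b=\bigvee_{p\in C(b)}p$ is an upper bound for each of its components, $d\leq b$, and transitivity gives $a\leq d\leq b$. The symmetric case $d\in C(c)$ yields $a\leq c$, which completes the proof.

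I do not expect a genuine obstacle once the problem sits inside $\mathcal C$, as everything collapses to the containment above. The one point demanding care is the appeal to the multi-colour hypothesis: the remark following the multi-colour expansion (with $0<1,2,3<4$ and $4=2\vee 3$) shows the conclusion fails without unique decomposition, since there $1\leq 2\vee 3$ while $1\not\leq 2$ and $1\not\leq 3$. The subtle part of the argument is therefore to invoke uniqueness in exactly the right place, namely to identify $b\vee c$ with the single combination $C(b)\vee C(c)$ rather than with some larger type admitting extra decompositions.
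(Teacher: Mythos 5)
Your proof is correct and follows essentially the same route as the paper's: both pass to colour combinations, use $C(b\vee c)=C(b)\vee C(c)\subseteq C(b)\cup C(c)$ to locate an element $a'\geq a$ inside $C(b)$ or $C(c)$, and conclude by transitivity. The extra care you take in justifying the transport to $\mathcal{C}$ and the counterexample without unique decomposition are both consistent with (and in the latter case explicitly stated after) the paper's treatment.
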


\begin{proof}
It suffices to think in terms of colour combinations.  If $a \leq b \vee c$ then $a \leq a'$ for some $a' \in C(b\vee c)$.  Since $C(b\vee c) = C(b)\vee C(c) \subset C(b)\cup C(c)$, $a' \in C(b)$ or $a' \in C(c)$, so that $a\leq b$ or $a \leq c$.
\end{proof}

If $F$ is not multi-colour this may not hold, as in the case $F = \{0,1,2,3,4\}$ with $0<1,2,3<4$, in which case $3 \leq 4 = 1 \vee 2$ but $3 \nleq 1,2$.\\

From Lemma \ref{mcpd} we conclude that for a multi-colour system the set
\begin{equation*}
E_a = \{a \in F:b \geq a\}
\end{equation*}
is not only increasing, but also decomposable, when $a$ is primitive.  Thus for a multi-colour growth model and each primitive $a$, $E_a$ is a dual type.  We show next that these are exactly the primitive dual types.\\

We say that $C(c) \succ C(b)$ if $a' \geq a$ for some $a' \in C(c)$, $a \in C(b)$.  Note that $\succ$ is reflexive but not necessarily antisymmetric or transitive; for example, let $b_i = a_i\vee a_{i+1}$ for some set of incomparable $a_i$, then $C(b_i)\succ C(b_j) \Leftrightarrow |i-j|\leq 1$.\\

Define also $C \sqcup C' = \{a \in C \cup C':a \not > b \textrm{ for any }b \in C\cup C'\}$, or $=0$ (the passive type) if $C \cap C'=\emptyset$.\\

\begin{proposition}\label{pdt}
If $F$ is multi-colour then $\tilde{F}$ is multi-colour with primitive types
\begin{equation*}
\tilde{F}_p = \{E_a:a \in F_p\}
\end{equation*}
and $F$ identifies with $\tilde{F}$ on active types via
\begin{equation*}
b \leftrightarrow E_b = \{c \in F:C(c)\succ C(b)\}
\end{equation*}
Moreover,
\begin{equation*}
E_a \vee E_b = \{c \in F:C(c) \succ C(a)\sqcup C(b)\}
\end{equation*}
In particular, the identification $a \leftrightarrow E_a$ of $F_p$ with $
\tilde{F}_p$ is order-reversing.
\end{proposition}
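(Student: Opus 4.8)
The plan is to work entirely in terms of colour combinations, to build every dual type from the bottom up out of the sets $E_a$ with $a$ primitive, and to lean throughout on the decomposability fact from Lemma \ref{mcpd}. The first step is to record the basic description of the identification map: for any active type $b$,
\[
E_b = \{c : C(c) \succ C(b)\} = \bigcup_{a \in C(b)} E_a .
\]
Indeed, $C(c) \succ C(b)$ means $c' \geq a$ for some $c' \in C(c)$, $a \in C(b)$, and since $c = \bigvee C(c) \geq c'$ this forces $c \geq a$; conversely $c \geq a$ with $a$ primitive gives, by Lemma \ref{mcpd} applied to $a \leq \bigvee C(c)$, some $c' \in C(c)$ with $c' \geq a$. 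Each $E_a$ with $a \in F_p$ is increasing (trivially) and decomposable (this is exactly the consequence of Lemma \ref{mcpd} noted just before the statement), hence a dual type; since dual types are closed under unions, so is every $E_b$, and the map $b \mapsto E_b$ lands in $\tilde F$.

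Next I would pin down the primitive types. To see each $E_a$ ($a \in F_p$) is primitive in $\tilde F$, suppose $E_a = E \cup E'$; then $a$ lies in $E$ or $E'$, and since both are increasing, whichever one contains $a$ contains all $c \geq a$, i.e.\ contains $E_a$, and being a subset of $E_a$ it equals $E_a$. Conversely, every dual type $E$ decomposes over these generators: because $E$ is increasing, $E = \bigcup_{m \in \min(E)} E_m$ where $\min(E)$ denotes the minimal elements in the order of $F$; and because $E$ is decomposable, no minimal element can be compound, since a compound $m = a \vee b$ with $a,b < m$ would force $a$ or $b$ into $E$, contradicting minimality. Thus $\min(E) \subseteq F_p$, so $E = \bigvee_{m \in \min(E)} E_m$ is a join of generators, and whenever $|\min(E)| \geq 2$ this join is reducible (split off one $E_m$; the incomparability of distinct minimal elements keeps both pieces proper), so the primitive dual types are exactly $\{E_a : a \in F_p\}$.

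The third step upgrades ``a decomposition exists'' to ``it is unique,'' yielding multi-colour, and reads off the identification and order-reversal. The order-reversal $a \leq a' \Leftrightarrow E_{a'} \subseteq E_a$ is immediate from $E_a = \{c : c \geq a\}$, so the map carries antichains to antichains and $\{E_m : m \in \min(E)\}$ is a genuine colour combination of $\tilde F$. For uniqueness, given any $E = \bigvee_i E_{a_i}$ with $\{E_{a_i}\}$ an antichain of primitives, I would recover the $a_i$ as $\min(E)$: each $a_i$ is minimal in $E$ because a strictly smaller element of $E$ would dominate some $a_j$, forcing $a_j < a_i$ and contradicting incomparability transported through the order-reversal, and conversely every minimal element is some $a_i$. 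Hence $\tilde F$ is multi-colour with $C(E) = \{E_m : m \in \min(E)\}$. Surjectivity of $b \mapsto E_b$ follows because in a multi-colour system any colour combination $C$ equals $C(\bigvee C)$, so $\bigvee_{m \in \min(E)} E_m = E_b$ with $b = \bigvee \min(E)$; injectivity follows because $\min(E_b) = C(b)$ recovers $b$.

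Finally, for the join formula I would simply compute, using the first step,
\[
E_a \vee E_b = E_a \cup E_b = \bigcup_{d \in C(a) \cup C(b)} E_d ,
\]
and observe that $E_d \subseteq E_{d'}$ whenever $d \geq d'$, so each non-minimal generator is absorbed and only the minimal $d$ survive, giving $\bigcup_{m \in \min(C(a)\cup C(b))} E_m = \{c : C(c) \succ C(a) \sqcup C(b)\}$ since $C(a) \sqcup C(b)$ is precisely $\min(C(a)\cup C(b))$. I expect the main obstacle to be the constant bookkeeping of the two opposite orders at once, namely the order on $F$ versus the reversed order inherited by $\tilde F$, together with the careful treatment of the degenerate passive-type cases in the $\sqcup$ convention; the decomposability hypothesis supplied by Lemma \ref{mcpd} is what makes the minimal-element description of a dual type clean, and it is the crux on which the entire argument rests.
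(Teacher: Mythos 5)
Your proof is correct and follows essentially the same route as the paper's: decomposability forces the minimal elements of a dual type to be primitive, increasingness gives $E=\bigcup_{m\in\min(E)}E_m$, and the identification $b\leftrightarrow E_b$ with $C(b)=\min(E_b)$ then yields the whole statement. You are somewhat more explicit than the paper about the uniqueness of decompositions in $\tilde{F}$ (the paper dispatches this with one line) and you verify primitivity of $E_a$ by splitting the union directly rather than via the $\sqcup$ computation, but these are minor variations on the same argument.
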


\begin{proof}
We first exhibit the identification of $F$ with $\tilde{F}$, then deduce the rest.  For $E\in\tilde{F}$ define the \emph{minimal types}
\begin{equation*}
E_m = \min(E) = \{a \in E:a \not > b\,\, \forall b \in E\}
\end{equation*}
Each minimal type $a \in E_m$ is primitive, since if $a=b\vee c$ then since $E$ is decomposable, $b \in E$ or $c \in E$; if $b \in E$ then $a \not > b$, but since $a \geq b$ it follows that $a=b$.  Moreover, $a<>b$ for $a,b \in E_m$ thus $E_m$ is a colour combination.\\

Since $E$ is increasing, $E \supseteq \{b:b \geq a \textrm{ for some } a \in E_m\} = \bigcup_{a \in E_m}E_a$, moreover $b \in E \Rightarrow b \in E_m \textrm{ or } b>a \textrm{ for some } a \in E_m$ which means $E \subseteq \bigcup_{a \in E_m}E_a$, thus $E = \bigcup_{a \in E_m}E_a$.  Therefore $E = \{c \in F:C(c) \succ E_m\}$, or $E = E_b = \{c \in F:C(c) \succ C(b)\}$ where $b \in F$ is the unique active type that satisfies $C(b)=E_m$.  To each active type $b$ corresponds a different set $E_b$, which yields the identification of $F$ with $\tilde{F}$.  The expression for $E_a \vee E_b$ follows easily.\\

Observe that if $a \in F_p$ and $a \in C(b)\sqcup C(c)$ then either $a \in C(b)$ or $a \in C(c)$, so that if $E_a = E_b \vee E_c$ then $a \in \min E_b$ or $a \in \min E_c$.  Supposing the former, $E_a \subset E_b$ which means $E_a =E_b$, i.e., $E_a$ is primitive.  On the other hand, if $b$ is compound then $C(b) = \sqcup_{a \in C(b)} C(a)$, so that $E_b = \vee_{a \in C(b)}E_a$ and $E_b$ is not primitive.  Thus $\tilde{F}_p = \{E_a:a \in F_p\}$.\\

Since $E_b \neq E_c$ when $C(b)\neq C(c)$, the dual is multi-colour.  If $a,b \in F_p$ are primitive and $a < b$ then $C(a) \sqcup C(b) = C(a)$, so $E_a > E_b$.\\
\end{proof}

We now approach the \emph{double dual}.  If a growth model has a dual, then from the proof of Theorem \ref{thmdual} it follows that its dual is additive, and so the dual has a dual, which is the double dual, whose set of types we denote by $\overline{F}$, using $\lambda$ to denote an element of $\overline{F}$.  For a multi-colour system, it follows from an application of Proposition \ref{pdt} to $\tilde{F}$ that the double dual is multi-colour and has primitive types
\begin{equation*}
\lambda_a = \{E \in \tilde{F}:E\supseteq E_a\}
\end{equation*}
where $a \in F_p$, which are in $1:1$ correspondence with $F_p$.  Since duality is order-reversing on primitive types, double duality is order-preserving on primitive types.  By identifying active types with colour combinations, we obtain an order-preserving identification of $F$ with $\overline{F}$ given by
\begin{equation*}
b \leftrightarrow \lambda_b = \{E \in \tilde{F}:E \supseteq E_a \textrm{ for some } a \in C(b)\}
\end{equation*}
For a local configuration $\phi \in F^T$, let $\Xi_{\phi} \in \overline{F}^T$ denote the corresponding local double dual configuration, under the above identification.  The following relationship holds for compatibility.
\begin{lemma}\label{ddc}
Let $\phi \in F^T$ and $\theta \in \tilde{F}^T$, then
\begin{equation*}
\phi \sim \theta \Leftrightarrow \theta \sim \Xi_{\phi}
\end{equation*}
\end{lemma}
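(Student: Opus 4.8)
The plan is to reduce the claimed equivalence $\phi \sim \theta \Leftrightarrow \theta \sim \Xi_\phi$ to a single-site statement and then verify that statement by unwinding the compatibility relation together with the double-dual identification $b \leftrightarrow \lambda_b$. Both sides are existential over sites $x \in T$, and at each site the relevant condition depends only on $\phi(x)$ and $\theta(x)$: recall that $\Xi_\phi(x) = \lambda_{\phi(x)}$ when $\phi(x)$ is active and $\Xi_\phi(x)=0$ otherwise, and that $\theta \sim \Xi_\phi$ unfolds (at the double-dual level) to the requirement that $\theta(x) \neq 0$ and $\theta(x) \in \Xi_\phi(x) = \lambda_{\phi(x)}$ for some $x$. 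So it suffices to establish, for each fixed $x$, that the per-site condition ``$\phi(x) \in \theta(x) \neq 0$'' is equivalent to ``$\theta(x) \neq 0$ and $\theta(x) \in \lambda_{\phi(x)}$'', and then take the disjunction over $x$. The passive cases are immediate: if $\phi(x)=0$ then $\phi(x) \notin \theta(x)$ (dual types contain only active types) and $\Xi_\phi(x)=0$, so both conditions fail; and if $\theta(x)=0$ both conditions fail by fiat.

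Writing $b=\phi(x)$ (active) and $E=\theta(x)$ (a nonzero dual type, hence increasing and decomposable), and recalling that $\theta(x) \in \lambda_b$ means $E \supseteq E_a$ for some $a \in C(b)$, the heart of the matter is the equivalence
\[
b \in E \Leftrightarrow E \supseteq E_a \textrm{ for some } a \in C(b).
\]
For the backward direction I would use that every $a \in C(b)$ satisfies $a \leq b$, since $b = \bigvee_{a \in C(b)} a$; thus $b \in E_a$, and $E \supseteq E_a$ forces $b \in E$. For the forward direction I would invoke decomposability: from $b = \bigvee_{a \in C(b)} a \in E$, repeatedly peeling off the join one element at a time yields some $a \in C(b)$ with $a \in E$, and then increasingness of $E$ gives $E_a = \{c \in F : c \geq a\} \subseteq E$, as required. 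Note the two directions use precisely the two defining properties of a dual type, increasingness for the backward implication and decomposability for the forward one.

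I do not expect a serious obstacle; the content is entirely definitional, and the multi-colour hypothesis is exactly what guarantees that $C(b)$ is a single well-defined decomposition, so that $\lambda_b$ and hence the per-site condition are unambiguous. The only point requiring care is bookkeeping the direction of the compatibility relation at the double-dual level, namely confirming that $\theta \sim \Xi_\phi$ really does unfold to the membership $\theta(x) \in \lambda_{\phi(x)}$ and thence to the set containment $\theta(x) \supseteq E_a$; once that identification is set up correctly the equivalence drops out, and the lemma follows by quantifying over $x$.
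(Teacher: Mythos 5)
Your proof is correct, and it follows the same overall reduction as the paper's: both arguments observe that the compatibility relations on each side are existential over sites and boil the lemma down to the single-site equivalence $b \in E \Leftrightarrow E \in \lambda_b$ for an active type $b$ and a nonzero dual type $E$. Where you diverge is in how you verify that equivalence. The paper first uses the identification of Proposition \ref{pdt} to write $E = E_c$ for a unique active type $c$, and then runs a chain of equivalences through the relation $\succ$ and the operation $\sqcup$ on colour combinations, namely $b \in E_c \Leftrightarrow C(b) \succ C(c) \Leftrightarrow \exists a \in C(b): C(a) \succ C(c) \Leftrightarrow E_a \vee E_c = E_c \Leftrightarrow E_c \supseteq E_a \Leftrightarrow E_c \in \lambda_b$. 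You instead argue directly from the two defining properties of a dual type: decomposability lets you peel $b = \bigvee_{a \in C(b)} a \in E$ down to some $a \in C(b)$ with $a \in E$, whence increasingness yields $E_a = \{c \in F : c \geq a\} \subseteq E$; conversely, $a \leq b$ for $a \in C(b)$ gives $b \in E_a \subseteq E$. Your route bypasses the structure theorem for $\tilde{F}$ entirely and makes explicit which hypothesis (decomposability versus increasingness) drives which direction, at the mild cost of re-deriving what Proposition \ref{pdt} already packages; the paper's version is shorter on the page precisely because it leans on that earlier bookkeeping. Both are sound, and your handling of the passive cases and of the unfolding of $\theta \sim \Xi_\phi$ at the double-dual level is careful and correct.
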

\begin{proof}
It is sufficient to have $b \in E \Leftrightarrow E \in \lambda_b$, for each $b \in F$ and $E \in \tilde{F}$, and it is not hard to check that $b\in E_c
\Leftrightarrow C(b)\succ C(c)
\Leftrightarrow \exists a \in C(b): C(a)\succ C(c)
\Leftrightarrow C(a)\sqcup C(c) = C(c)
\Leftrightarrow E_a \vee E_c = E_c
\Leftrightarrow E_c \supseteq E_a
\Leftrightarrow E_c \in \lambda_b$.
\end{proof}
We now prove Theorem \ref{thmadd} by showing that if an additive multi-colour growth model has a dual, then it identifies with its double dual, which since the double dual is additive, implies that the process itself is additive.
\begin{proof}[Proof of Theorem \ref{thmadd}]
It suffices to show that transitions in the double dual commute with the identification of $F^S$ with $\overline{F}^S$, i.e., to show that for double dual transition mappings $\overline{e}$,
\begin{equation*}
\overline{e}(\Xi_{\phi}) = \Xi_{e(\phi)}
\end{equation*}
Transition mappings $\overline{e}$ of the double dual are given by
\begin{equation*}
\overline{e}(\Xi) = \{\theta : \tilde{e}(\theta) \sim \Xi\}
\end{equation*}
where the identification $\Xi \leftrightarrow \{\theta: \theta \sim \Xi\}$ is implicit.  However, $\tilde{e}(\theta)\sim \Xi_{\phi} \Leftrightarrow \phi \sim \tilde{e}(\theta) \Leftrightarrow e(\phi) \sim \theta \Leftrightarrow \theta \sim \Xi_{e(\phi)}$, using Lemma \ref{ddc} and the duality relation for the process on $F^S$.  Therefore,
\begin{equation*}
\overline{e}(\Xi_{\phi}) = \{\theta: \theta \sim\Xi_{e(\phi)}\} = \Xi_{e(\phi)}
\end{equation*}
as desired.
\end{proof}

\section{Percolation viewpoint}\label{secperc}
A percolation model (percolation process) is any stochastic model with some sort of spatial structure and a notion of \emph{path} from one point to another.  We distinguish three models in increasing order of resemblance to a particle system.
\begin{enumerate}
\item
The classical percolation model \cite{perc}: take the random subgraph of the graph $(V,E)$ with $V = \mathbb{Z}^2$ and $E = \{xy:\|x-y\|_{\infty}=1\}$ in which each edge is included independently with probability $p \in (0,1)$.  Sites $x,y \in V$ are \emph{connected} if they are linked by a path of edges.  Percolation occurs if $(0,0)$ belongs to an infinite cluster of connected sites.
\item
Oriented percolation in two dimensions \cite{oriperc}:  take the random subgraph of the directed graph $(V,E)$ with $V = \{(x,y) \in \mathbb{Z}^2:y \geq 0, x+y \textrm{ is even}\}$ and edges from $(x,y)\rightarrow (x-1,y+1)$ and from $(x,y)\rightarrow (x+1,y+1)$ for each $(x,y) \in V$, again with each edge included independently with probability $p$.  Percolation occurs if there is an infinite path of directed edges starting at $(0,0)$.
\item
The contact process (this one \emph{is} a particle system):  take the particle system with types $\{0,1\}$ and transitions
\begin{itemize}
\item $0 \rightarrow 1$ at site $x$ at rate $\lambda n_1(x)$ and
\item $1\rightarrow 0$ at each site at rate 1
\end{itemize}
where $n_1(x)$ is the cardinality of the set $\{y:xy\in E, y \textrm{ is in state 1 }\}$.  The spacetime event map for the process leads to a percolation model on the spacetime set $V \times \mathbb{R}^+$ in which the interval $\{x\}\times [s,t]$ is an upward directed edge if there are no deaths on the interval, and $(x,t) \rightarrow (y,t)$ is a directed edge whenever there is a transmission event from $x$ to $y$ at time $t$.  Percolation occurs from $(x,0)$ if there is an infinite path (i.e., a path of infinite length) of directed edges starting at $(x,0)$.
\end{enumerate}
For the contact process it is not hard to confirm that percolation from $(x,0)$ implies survival starting from the single active site $x$, and vice versa.  For a multi-particle system it may seem less clear how to prescribe a percolation process.  However, for an additive multi-colour system it can be easily achieved, provided that we colour code the edges, as we show now.\\

On the spacetime set $V \times \mathbb{R}^+$, the interval $\{x\}\times [s,t)$ is an edge of every colour if there are no events on that interval, since any colour can propagate upwards along it.  If an event occurs at time $t$ in the transition mapping $e:F^T\rightarrow F^T$, there is an edge from $(x,t^-) \rightarrow (y,t)$ (where $x=y$ is allowed) with initial colour $a$ and terminal colour $b$ if $b \in C(e(\delta_x(a))(y))$, that is, if type $a$ at $x$ \emph{produces} type $b$ at $y$ through $e$.  Edges $\{x\}\times [s,t)$ with colour $a$ and $\{y\}\times [t,u)\}$ with colour $b$ can be linked by an edge from $(x,t^-) \rightarrow (y,t)$ if it has initial colour $a$ and terminal colour $b$.  Percolation occurs from type $a$ at $(x,0)$ if there is an infinite path of directed coloured edges starting at $(x,0)$ with colour $a$, and it characterizes survival:
\begin{theorem}
For an additive multi-colour particle system, survival occurs starting from the single type $a$ active site $x$ if and only if percolation occurs from type $a$ at $(x,0)$.
\end{theorem}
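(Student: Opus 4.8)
The plan is to build a dictionary between the primitive colours carried by percolation paths and the colours present in the forward process, and then to convert the survival/percolation equivalence into a compactness statement. By additivity it suffices to treat the case that the starting type $a$ is \emph{primitive}. Indeed, the process from $\delta_x(a)$ is the join of the processes from $\delta_x(a_i)$ over $a_i\in C(a)$, so $\eta_t=\underline 0$ exactly when all these component processes vanish at $t$; since $\underline0$ is absorbing and $C(a)$ is finite, the join survives on a realization if and only if one of the primitive starts does. On the percolation side the colours available at $(x,0)$ from starting type $a$ are precisely those in $C(a)$, so an infinite path ``of colour $a$'' is the same as an infinite path of some colour $a_i\in C(a)$. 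Hence I fix $a$ primitive and write $\eta$ for the process from $\delta_x(a)$, for which a path from $(x,0)$ literally carries colour $a$.

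The heart of the argument is the following lemma. For every site $y$ and time $t$, letting $P(y,t)$ be the set of terminal colours of directed coloured paths from $(x,0)$ to $(y,t)$, I will show
\[
C(\eta_t(y))\ \subseteq\ P(y,t)\ \subseteq\ \{c\in F_p:c\le\eta_t(y)\},
\]
so in particular $P(y,t)\neq\emptyset\iff\eta_t(y)\neq 0$. I would prove the two inclusions by induction on the almost surely finite number of events affecting $(y,t)$; when $|S|=\infty$ the reduction to finitely many relevant events is exactly the localization used in the proof of Theorem \ref{thmdual} via Lemma \ref{bp}. Temporal edges preserve colours and leave $\eta$ unchanged, so only event times matter. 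At an event with mapping $e:F^T\to F^T$ at time $u$ (with $\eta_{u^-}=\eta_s$ for the previous event time $s$), additivity gives
\[
\eta_u(y)=\bigvee_{x'\in T}\ \bigvee_{a'\in C(\eta_s(x'))}e(\delta_{x'}(a'))(y),
\]
so $C(\eta_u(y))$ is the set of maximal elements of $U:=\bigcup_{x',a'}C(e(\delta_{x'}(a'))(y))$, while an edge of the percolation graph carries colour $a'$ at $(x',u^-)$ to colour $b$ at $(y,u)$ exactly when $b\in C(e(\delta_{x'}(a'))(y))$. For the lower inclusion, any $b\in C(\eta_u(y))\subseteq U$ lies in some $C(e(\delta_{x'}(a'))(y))$ with $a'\in C(\eta_s(x'))$; by the inductive hypothesis a path reaches $(x',u^-)$ with colour $a'$, and this edge extends it to $(y,u)$ with colour $b$. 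For the upper inclusion, any path-colour $c$ at $(y,u)$ arrives along an edge from some $a'\le\eta_s(x')$ (inductive hypothesis) with $c\in C(e(\delta_{x'}(a'))(y))$; since an additive model is attractive, $\delta_{x'}(a')\le\eta_s|_T$ forces $e(\delta_{x'}(a'))(y)\le\eta_u(y)$, whence $c\le\eta_u(y)$. Lemma \ref{mcpd} is what lets me pass between $c\le\bigvee(\cdots)$ and ``$c\le$ one summand'' for primitive $c$ throughout.

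Granting the lemma, survival $\{\eta_t\neq\underline0\ \forall t\}$ is equivalent to the statement that for every $t$ some coloured path from $(x,0)$ reaches time $t$. It then remains to show that ``reaches every time level'' is equivalent to the existence of an infinite path. One direction is immediate: within any finite horizon $[0,t]$ only finitely many events occur in the relevant region (Lemma \ref{bp}), so an infinite path cannot be confined to bounded time and must reach arbitrarily large $t$, hence passes through every level. For the converse I would run a König/compactness argument: choosing for each $n$ a path $Q_n$ reaching time $n$, the restrictions of the $Q_n$ to $[0,1]$ take only finitely many values, so infinitely many share a common prefix $R_1$ ending at time $1$; refining successively over $[0,2],[0,3],\dots$ yields a nested sequence $R_1\subseteq R_2\subseteq\cdots$ whose union is an infinite path. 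Combining the two equivalences gives survival $\iff$ percolation.

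I expect the lemma to be the crux. The tempting clean identity $P(y,t)=C(\eta_t(y))$ is false: a colour produced along one path can be strictly dominated by a colour from another path once the two are joined, so it is dropped from the maximal decomposition $C(\eta_t(y))$ yet remains the terminal colour of a valid path. The correct invariant is therefore the two-sided inclusion above, and the only feature the rest of the proof needs is that \emph{non-emptiness} is constant across the sandwich. Keeping the additive decomposition by site and by primitive colour consistent through each event — invoking attractiveness and Lemma \ref{mcpd} at exactly the right moments — is the delicate part; the compactness step is then routine.
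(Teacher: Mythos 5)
Your proof is correct and takes the same route as the paper's, which compresses the entire argument into two sentences invoking additivity: your sandwich lemma $C(\eta_t(y))\subseteq P(y,t)\subseteq\{c\in F_p:c\le\eta_t(y)\}$ and the K\"onig extraction of an infinite path from paths reaching every time level are exactly the details that the paper's ``it follows by additivity'' leaves implicit. You also correctly observe that the naive identity $P(y,t)=C(\eta_t(y))$ fails and that only the non-emptiness carried by the two-sided inclusion is needed, which is a point the paper does not address at all.
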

\begin{proof}
If there is a $t>0$ so that all coloured paths from $(x,0)$ are contained in $V \times [0,t)$ then it follows by additivity that the process has died out by time $t$.  If there is an infinite path, then it follows again by additivity that the process survives.
\end{proof}

\section{Population viewpoint}\label{secpop}
Multi-colour systems can be understood in a natural way as models of interacting populations.  Let each colour correspond to a type of organism (plant, animal, fungus etc.).  Recalling what it means to be a colour combination, at each site we can have up to one each of any collection of organisms of different types, provided the types are incomparable.  Thus the restrictions on the population are that
\begin{itemize}
\item there can be at most one organism of a given type, at a given site, and
\item organisms of comparable type cannot exist together at the same site.
\end{itemize}
These are realistic assumptions, if we think of populations having a carrying capacity, and of stronger types excluding weaker types.  If we wanted to model a carrying capacity $K>1$ we could just assign a primitive type to each number $1,2,3,...,K$ of organisms of a certain type and then order them $1<2<...<K$.\\

At an event with transition mapping $e:F^T\rightarrow F^T$, for $a,b\in F_p$ and $x,y \in T$ we say that $a$ at $x$ \emph{produces} $b$ at $y$, denoted $e:(a,x)\rightarrow (b,y)$, if
\begin{equation*}
b \in C(e(\delta_x(a))(y))
\end{equation*}
that is, if $b$ is one of the organisms in $e(\phi)(y)$, when $\phi$ is the local configuration having only the organism $a$ at site $x$ and no other organisms at sites $y \in T, y \neq x$.\\

Using this notion of production, we can define a multi-type branching process in which each individual gives birth to individuals of the appropriate types at the appropriate rates.  We can then ask how the spatial process differs from the branching process, which is answered by the following result.

\begin{theorem}
For an additive multi-colour system, the only interactions between organisms are due to the effects of crowding and exclusion.
\end{theorem}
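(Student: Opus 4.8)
The plan is to make the informal statement precise by comparing the spatial process directly to the multi-type branching process built from the production relation, and then to show, using additivity, that the two agree except for the collapsing effect of the join. The central tool is an additive decomposition of each transition mapping. First I would write any local configuration $\phi \in F^T$ as $\phi = \bigvee_{x \in T}\delta_x(\phi(x))$, and then decompose each single-site value into its colour combination, $\delta_x(\phi(x)) = \bigvee_{a \in C(\phi(x))}\delta_x(a)$. Applying additivity of $e$ twice yields
\begin{equation*}
e(\phi)(y) = \bigvee_{x \in T}\ \bigvee_{a \in C(\phi(x))} e(\delta_x(a))(y),
\end{equation*}
which exhibits the outcome at $y$ as the join of the independent contributions $e(\delta_x(a))$ of each organism $a$ sitting at each site $x$.

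Next I would read the branching process off this decomposition. In the graphical construction the mapping $e$ fires at its fixed Poisson rate $r_e$ irrespective of the configuration, and by the display above the organisms it creates from a given pair $(a,x)$ are exactly $\{(b,y) : b \in C(e(\delta_x(a))(y))\}$, which is precisely the production relation $e:(a,x)\to(b,y)$, with no reference to any other organism present. Thus if we ran the very same events on a state space of multisets of organisms, letting each $(a,x)$ reproduce independently according to its productions, we would recover exactly a multi-type branching process with the stated rates. This step establishes that both the timing and the per-organism output of the spatial process are configuration-independent.

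Finally I would identify the sole discrepancy between the two processes as crowding and exclusion. The spatial process differs from the branching process only through the outer join $\bigvee$ in the display, which collapses the multiset of produced organisms at each site $y$ into a single colour combination. By the definition of the join on $\mathcal{C}$, namely $C\vee C' = \{a \in C\cup C': a \not< b \textrm{ for any } b \in C\cup C'\}$, this collapse does exactly two things: it merges repeated copies of the same primitive type into one, since $b \vee b = b$, which is crowding; and it discards any produced type lying strictly below another produced type at the same site, since $a < b \Rightarrow a \vee b = b$, which is exclusion. Incomparable types are retained side by side, so no further interaction is introduced. Hence every deviation of the spatial evolution from the free branching evolution is attributable to these two effects, which is the assertion.

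I expect the main obstacle to be conceptual rather than computational: because the statement is phrased informally, the real work lies in fixing the correct formalization, i.e., that production rates and per-organism outputs are configuration-independent and that the join is the unique source of interaction. Once the additive decomposition above is in hand, the identification of the join with crowding and exclusion is routine, and the comparison with the branching process follows immediately.
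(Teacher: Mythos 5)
Your proposal is correct and follows essentially the same route as the paper's proof: decompose the configuration into individual organisms via additivity, apply the mapping to each organism separately, and observe that the outer join is the only place where organisms interact, collapsing duplicates (crowding) and dominated types (exclusion). Your version merely makes explicit, via the displayed decomposition $e(\phi)(y) = \bigvee_{x}\bigvee_{a \in C(\phi(x))} e(\delta_x(a))(y)$ and the formula for the join on colour combinations, what the paper states informally.
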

\begin{proof}
For an additive system, to compute the effect of a transition we first focus on each organism $(x,a)$ present before the event and include everything that it produces.  Then, if for example $(x,a)$ produces $(z,c)$ but $(y,b)$ produces $(z,d)$ with $d>b$, organism $d$ \emph{excludes} organism $b$; if $(y,b)$ also produces $(z,b)$ then only one copy of organism $b$ is retained at $z$.  Thus, accounting for each organism's production and then clearing any duplicate or excluded organisms gives the result of a transition.
\end{proof}

With this characterization in hand we now describe which types of interaction fail to be additive:
\begin{enumerate}
\item
If the system exhibits \emph{inhibition}, that is, the presence of one type of organism decreases the production of another organism, then it cannot be additive, or even attractive; here's why.  The introduction of an organism increases the configuration.  However, for some organism its introduction decreases the production of another organism, so we have a transition $e$ with $\phi > \phi'$ but $e(\phi)<e(\phi')$.\\

\item
If the system exhibits \emph{cooperation}, that is, multiple organisms can produce together what they could not produce alone, then $e(\vee_{x,a}\delta_x(a)) > \vee_{x,a}e(\delta_x(a))$ for some collection of organisms $(x,a)$.  Thus the system may be attractive, but cannot be additive.\\
\end{enumerate}
In an additive system, organisms neither inhibit nor enhance each other's production.  In this sense, additive systems are the least interactive of all interacting particle systems.  However, mathematically, they are relatively easy to analyze and have fairly strong properties, and biologically, they lie on the boundary between inhibition and cooperation, and make good models of situations where, to first order of approximation, the only interaction between organisms arises from the fact that they take up space.

\section{Positive Correlations}\label{secPC}
On a partially ordered state space, we say a function $f$ is \emph{increasing} if $\eta \leq \eta' \Rightarrow f(\eta)\leq f(\eta')$, and is  \emph{decreasing} if $-f$ is increasing.  For a particle system, increasing functions include indicators of the events $\{\eta(x)\geq a\}$ for some $a,x$.\\

There is a useful property called positive correlations or PC which is said to hold for a measure $\mu$ on a partially ordered state space when \begin{equation*}
\mathbb{E}_{\mu}fg \geq \mathbb{E}_{\mu}f \mathbb{E}_{\mu}g
\end{equation*}
for all increasing functions $f,g$, provided the expectations are defined.  It can be checked, for example, that deterministic configurations, that is, measures concentrated on a single configuration, have positive correlations.  We say a particle system has positive correlations or PC if
\begin{equation*}
\textrm{PC at time }0 \Rightarrow \textrm{ PC at later times}
\end{equation*}
It is shown in \cite{ips}, Chapter II, that a particle system has PC if and only if all transitions are between comparable states, that is, for each $e,\phi$, either $e(\phi)\geq \phi$ or $e(\phi)\leq \phi$.\\

For an additive system, we can use this result to characterize PC in terms of production.  To warm up, we first discern a few types of production:
\begin{enumerate}
\item persistence:  $(x,a)$ produces $(x,a)$ and nothing else
\item movement:  $(x,a)$ produces $(y,a)$, for some $y\neq x$, and nothing else
\item birth:  $(x,a)$ produces $(x,a)$ and also something else
\item death:  $(x,a)$ produces nothing
\item promotion/demotion: $(x,a)$ produces $(x,b)$ with $b>a$ (promotion) or $b<a$ (demotion), and nothing else
\item death with dispersal:  $(x,a)$ produces nothing at $x$, but produces something elsewhere
\item neighbour-assisted survival:  $(x,a)$ produces nothing, but for some $y\neq x$, $(y,b)$ produces $(x,a)$
\item transmutation:  $(x,a)$ produces $(x,b)$ with $b<>a$
\end{enumerate}
Movement, for example, does not preserve PC, nor does death with dispersal or transmutation.  Persistence, birth, death, promotion/demotion and neighbour-assisted survival all preserve PC.\\

Say that an organism $(x,a)$ \emph{waxes} if $(x,a)$ produces at least $(x,a)$ (birth or persistence) and \emph{wanes} if it produces at most $(x,b)$ for some $b \leq a$ and nothing else (persistence, demotion or death).  A transition mapping $e$ is said to be \emph{productive} if each $(x,a)$ waxes, and \emph{destructive} if each $(x,a)$ wanes.  An organism $(x,a)$ \emph{compensates for the loss of} another organism $(y,b)$ in a transition if $(y,b)$ is demoted or dies, but $(x,a)$ produces $(y,b)$.
\begin{theorem}\label{thmPC}
For an additive multi-colour system, if each transition is either productive or destructive, then the system has PC.  A necessary and sufficient condition is that for each transition mapping, 
\begin{itemize}
\item each organism either waxes or wanes, and
\item if $(x,a)$ waxes then it compensates for the loss of every organism that wanes.
\end{itemize}
\end{theorem}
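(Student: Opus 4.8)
The plan is to reduce everything to the criterion recalled above from \cite{ips}: the system has PC if and only if every transition mapping $e$ sends every local configuration $\phi$ to a comparable state, i.e. $e(\phi)\ge\phi$ or $e(\phi)\le\phi$. Additivity is what makes this criterion tractable, because $e(\phi)=\bigvee_{(x,a)} e(\delta_x(a))$, the join running over the organisms $(x,a)$ present in $\phi$ (those with $a\in C(\phi(x))$). Thus the action of $e$ on all of $F^T$ is determined by its action on single organisms $\delta_x(a)$, and comparability of $e(\phi)$ with $\phi$ can be tested site by site, using Lemma \ref{mcpd} to split inequalities of the form $c\le\bigvee(\cdots)$ with $c$ primitive into component inequalities.

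First I would dispose of the sufficient condition. If $e$ is productive then each organism waxes, so $e(\delta_x(a))(x)\ge a$ and hence $e(\delta_x(a))\ge\delta_x(a)$ for every organism; taking the join gives $e(\phi)\ge\phi$ for all $\phi$. If $e$ is destructive then each organism wanes, so $e(\delta_x(a))=\delta_x(b)$ with $b\le a$ and nothing off $x$, giving $e(\delta_x(a))\le\delta_x(a)$ and hence $e(\phi)\le\phi$. Either way every transition is between comparable states, so the criterion yields PC.

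For the necessary and sufficient condition I would argue both directions from the criterion. Necessity of the first bullet is immediate on testing $\phi=\delta_x(a)$: comparability of $e(\delta_x(a))$ with $\delta_x(a)$ forces the organism to wax ($e(\delta_x(a))\ge\delta_x(a)$) or wane ($e(\delta_x(a))\le\delta_x(a)$). For sufficiency I would establish a dichotomy: if $\phi$ contains an organism that strictly grows then $e(\phi)\ge\phi$, whereas if every organism of $\phi$ merely persists or wanes then $e(\phi)\le\phi$. The second case is immediate since all single-organism images lie below $\delta_x(a)$. In the first case I verify $e(\phi)(z)\ge\phi(z)$ componentwise: for a waxing organism $(z,c)$ the bound comes from its own image, while for an organism $(z,c)$ that wanes with an actual loss the compensation hypothesis supplies a co-present waxer whose image already dominates $c$ at $z$, so the join recovers $\phi(z)$, with Lemma \ref{mcpd} ruling out cross-cancellation. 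Necessity of the compensation bullet is the contrapositive: given an organism $(x,a)$ that strictly waxes together with a waner $(y,b)$ that loses but is not restored by $(x,a)$, the two-organism configuration $\phi=\delta_x(a)\vee\delta_y(b)$ satisfies $e(\phi)\not\ge\phi$ at $y$ (by Lemma \ref{mcpd}, since neither the demoted value $\beta<b$ nor $e(\delta_x(a))(y)$ dominates the primitive $b$) and $e(\phi)\not\le\phi$ at $x$ (the strict waxer grows there while the waner produces nothing off $y$); incomparability then makes PC fail.

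The main obstacle is phrasing the compensation clause with exactly the right quantifier. The delicate point is that persistence counts as both waxing and waning, so a pure persister is a ``waxer'' that can never produce a distant organism; the incomparability argument shows that only organisms which strictly grow can be paired with a loss to break comparability, so the compensation obligation is really needed only from strict waxers, while persisters are automatically harmless (they fall into the all-$\le$ case). I would make this precise by reading the clause as applying to organisms that wax without waning, and would also handle separately the coincident-site case $x=y$, where the waxer and the losing waner must carry incomparable primitive types to coexist in a single colour combination, checking the same incomparability by tracking the join at $x$.
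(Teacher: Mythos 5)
Your proposal is correct and follows essentially the same route as the paper's proof: reduce to the comparability criterion $e(\phi)\geq\phi$ or $e(\phi)\leq\phi$ from \cite{ips}, decompose $e(\phi)$ over individual organisms by additivity, establish the waxing/waning dichotomy for sufficiency, and use the two-organism configuration $\delta_x(a)\vee\delta_y(b)$ for necessity of compensation. Your extra care about the quantifier --- that the compensation obligation only bites for organisms that wax without waning, since pure persisters fall into the all-$\leq$ case --- is a point the paper's proof also makes implicitly (its necessity argument explicitly restricts to ``a waxing organism that is not waning''), so you are reading the statement the same way the author intends.
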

\begin{proof}
The sufficient condition follow easily from additivity, using the fact that $\phi\leq\psi,\phi'\leq\psi'\Rightarrow\phi\vee\phi' \leq \psi\vee \psi'$.  If a transition mapping has both a waxing organism $(x,a)$ that is not waning ($(x,a)$ produces both $(x,a)$ and something else, possibly $(x,b)$ for $b>a$ or $b<>a$, or possibly another organism elsewhere) and a waning organism $(y,b)$, and $(x,a)$ does not compensate for $(y,b)$'s loss, then letting $\phi = \delta_x(a) \vee\delta_y(b)$, $e(\phi)(y) < b = \phi(y)$ but $e(\phi)(z) > \phi(z)$ for some $z$, so $e(\phi) <> \phi$.  If the condition given does hold, then for any collection of organisms $(x,a)$ joined into a local configuration $\phi$, if at least one of them is waxing then $e(\phi) \geq \phi$ since all losses are compensated for by the waxing organism, and if they are all waning then $e(\phi)\leq \phi$.
\end{proof}
Note that PC is not always preserved by the dual, as the following example shows.\\

\begin{example}
Define the \emph{dandelion process} with types $\{0,1\}$ as follows.  For each $x \in V$, fix a dispersal distribution $p(x,\cdot)$ which is an atomic measure on subsets of $V$.  At each site $x$, at rate $1$, if $x$ is occupied then it dies and disperses, occupying the unoccupied sites in the set $A$ with probability $p(x,A)$.  Equivalently, for each site $x \in V$ and each subset $A \subset V$, at rate $p(x,A)$ (possibly $=0$), if $x$ is occupied then it dies and disperses to the sites in $A$.  This process is additive and multi-colour.  The dual to the dandelion process is the \emph{helper process} in which at rate $p(x,A)$, $x$ becomes (or remains) unoccupied unless some site in $A$ is occupied, in which case $x$ remains (or becomes)  occupied.  Since the dandelion process has death/dispersal transitions, it does not have PC.  In the helper process, however, all waning organisms have their losses compensated for, so it follows that the helper process has PC.
\end{example}

\section{Complete Convergence}\label{seccomp}

We focus now on additive multi-colour growth models having only productive and destructive transitions, which by Theorem \ref{thmPC} have positive correlations; for convenience we call these models \emph{simple}.  We first note the following fact.

\begin{lemma}
The dual of a simple model is simple (and thus has PC).
\end{lemma}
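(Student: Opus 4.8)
The plan is to reduce everything to a single statement about transition mappings: the dual of a productive mapping is productive, and the dual of a destructive mapping is destructive. Indeed, by Theorem \ref{thmdual} the dual of an additive growth model is again an additive growth model, and by Proposition \ref{pdt} the dual type set $\tilde{F}$ is multi-colour; so the dual is automatically an additive multi-colour growth model, and to conclude it is simple it only remains to verify that each dual transition is productive or destructive, whereupon PC follows from Theorem \ref{thmPC}. Since the dual event structure consists of the dual mappings $\tilde{e}_i$ at the same rates $r_i$, it suffices to treat one mapping $e$ and its dual $\tilde{e}$.

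The key is to restate both hypothesis and conclusion as monotonicity. From the proof of Theorem \ref{thmPC}, a productive mapping (all organisms wax) satisfies $e(\phi)\geq\phi$ for every $\phi$, while a destructive mapping (all organisms wane) satisfies $e(\phi)\leq\phi$. On the dual side I would use the order-preserving identification $\zeta \leftrightarrow \{\eta:\eta\sim\zeta\}$, which at the level of local configurations reads $\theta\leq\theta' \Leftrightarrow \{\phi:\phi\sim\theta\}\subseteq\{\phi:\phi\sim\theta'\}$. Combining this with the duality relation $\phi\sim\tilde{e}(\theta)\Leftrightarrow e(\phi)\sim\theta$ established in the proof of Theorem \ref{thmdual}, the inequality $\tilde{e}(\theta)\geq\theta$ becomes equivalent to the implication $\phi\sim\theta \Rightarrow e(\phi)\sim\theta$, and $\tilde{e}(\theta)\leq\theta$ becomes equivalent to $e(\phi)\sim\theta \Rightarrow \phi\sim\theta$.

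Both implications then fall out of the fact that every dual type is an \emph{increasing} set. For the productive case, suppose $e(\phi)\geq\phi$ and $\phi\sim\theta$, so that $\phi(x)\in\theta(x)$ is active for some $x$; then $e(\phi)(x)\geq\phi(x)>0$ is active, and since $\theta(x)$ is increasing we get $e(\phi)(x)\in\theta(x)$, hence $e(\phi)\sim\theta$. This yields $\tilde{e}(\theta)\geq\theta$ for all $\theta$, i.e. $\tilde{e}$ is productive. The destructive case is symmetric: if $e(\phi)\leq\phi$ and $e(\phi)\sim\theta$, then $\phi(x)\geq e(\phi)(x)\in\theta(x)$, and the increasing property of $\theta(x)$ forces $\phi(x)\in\theta(x)$, so $\phi\sim\theta$, giving $\tilde{e}(\theta)\leq\theta$. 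As a sanity check, in the self-dual $N$-stage contact process of Example \ref{ex1} promotions dualize to promotions and deaths to deaths, consistent with productive $\mapsto$ productive and destructive $\mapsto$ destructive.

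The one point where intuition can mislead, and thus the step to state carefully, is the \emph{direction}: duality reverses both time and the order on primitive types, and these two reversals cancel, so that productivity and destructiveness are each preserved rather than swapped. The remaining ingredients are routine: reading off $e(\phi)\geq\phi$ (resp. $e(\phi)\leq\phi$) directly from the definitions of waxing and waning together with additivity, and the passage to infinite $S$, which is already handled by the Lemma \ref{bp} bookkeeping used in the proof of Theorem \ref{thmdual}.
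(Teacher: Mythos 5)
Your proof is correct and follows the same route as the paper's---showing that the dual of a productive mapping is productive and the dual of a destructive mapping is destructive---except that the paper asserts this in a single sentence, whereas you actually derive it, via the characterizations $e(\phi)\geq\phi$ (resp.\ $e(\phi)\leq\phi$), the identification $\theta\leftrightarrow\{\phi:\phi\sim\theta\}$, and the fact that dual types are increasing sets. The only caveat is cosmetic: in the step ``$\tilde{e}(\theta)\geq\theta$ for all $\theta$, i.e.\ $\tilde{e}$ is productive,'' the condition $\tilde{e}(\theta)\geq\theta$ a priori also allows promotions rather than literal waxing of each dual organism, but this distinction is immaterial here since comparability of $\tilde{e}(\theta)$ with $\theta$ is exactly what is needed for PC and for the complete-convergence argument that uses this lemma.
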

\begin{proof}
Reversing a productive transition (i.e., looking at the dual transition) gives a productive transition, and the same is true of a destructive transition.
\end{proof}

As in the Introduction, we say the model has single-site survival or more succinctly, \emph{survives} if, started from a single active site, with positive probability there are active sites for all time.  To achieve complete convergence it is sufficient that when the model survives, for any $\epsilon>0$ and appropriately rescaled, both the model and its dual dominate an oriented percolation process with parameter $>1-\epsilon$, and we can then use the strategy described in\cite{growth}.  To obtain the comparison to oriented percolation we show that the construction of \cite{crit} can be applied, with slight modifications, to any simple model that survives.  We then need to show that if a simple model survives, then so does it dual.  However, applying the construction in \cite{crit} to the model, if it survives then it has a non-trivial upper invariant measure, which using the duality relation shows that its dual survives.  Since the dual of the dual is the model itself and the dual is a simple model, the same argument goes the other way, so the model survives if and only if its dual survives, which implies (in fact is directly equivalent to the fact) that single-site survival and $\nu\neq\delta_0$ are equivalent, which is part of Theorem \ref{thmcc}; we now prove the rest of Theorem \ref{thmcc}.\\

\begin{proof}[Proof of complete convergence in Theorem \ref{thmcc}]
We outline the modifications to the construction in \cite{crit} but leave out the details, which would be cumbersome and somewhat redundant to write down explicitly.  First, we need the model to be attractive, translation invariant, symmetric, and have positive correlations (check).  We also want the model to be irreducible, for two reasons (although the first reason can be circumvented): i) so that from a single site we can produce a large finite disc of active sites with some probability and ii) so that when arguing for complete convergence we can ensure the model going forward in time can intersect the dual going backward in time (for this see Section 5 of \cite{growth}).  In \cite{crit} the construction is done for a nearest-neighbour model (the basic contact process) but this can be generalized to a finite-range model by widening the sides of the box to be at least as large as the range of the model, and correspondingly increasing the range of production of a large finite disc of active sites.  For a growth model the zero state is reachable from any finite configuration, so for any $h>0$, with positive probability an active site dies within time $h$ without spreading activity to other sites (by encountering the right sequence of destructive events - if it could not then it would not be a growth model), which allows us to reproduce the argument in \cite{crit} that shows that when the process survives for a long time, it has many active sites on the top and sides of a well-chosen rectangular box.  Beyond this, the arguments in \cite{crit} are geometrical in nature and no additional properties specific to the process are required.  To get complete convergence, if the model does not survive there is nothing to do.  If the model survives, then its dual also survives, and we apply the construction of \cite{crit} to both the model and its dual and using the resulting oriented percolation models, follow the approach of \cite{growth} to show complete convergence.
\end{proof}

\section{Examples}
We conclude with some examples of additive multi-type growth models.
\subsection{Two-Stage Contact Process}
This is a natural generalization of the contact process introduced in \cite{krone} and further studied in \cite{fox} in which (viewing the contact process as a model of population growth) there is an intermediate juvenile type that must mature before it can produce offspring.  There are three types $0,1,2$ and the transitions at a site $x \in V$ are
\begin{itemize}
\item $2\rightarrow 0$ at rate $1$
\item $1\rightarrow 0$ at rate $1+\delta$
\item $1\rightarrow 2$ at rate $\gamma$
\item $0\rightarrow 1$ at rate $\lambda n_2(x)$
\end{itemize}
where $n_2(x)$ is the cardinality of the set $\{xy \in E:y \textrm{ is in state }2\}$.  In order to obtain an additive process we take the event structure with transition mappings
\begin{itemize}
\item recovery of type $1$ and $2$ at each site at rate $1$
\item recovery of type $1$ at rate $\delta$
\item onset i.e., $1\rightarrow 2$ at rate $\gamma$
\item transmission along each edge at rate $\lambda$
\end{itemize}
In other words, it is enough to make sure that whenever a $2$ recovers, a $1$ also recovers.  It is then easy to check that the resulting process has the correct transition rates and is additive with respect to the join operation on types given by $a\vee b = \max a,b$, so the partial order is $0<1<2$.  Since each transition is either productive or destructive, the model preserves positive correlations.  Taking the model on $\mathbb{Z}^d$ with nearest-neighbour interactions, the conditions of Theorem \ref{thmcc} are satisfied, so complete convergence holds.  This was shown in \cite{fox}, with the proof given along the same lines but specifically for the two-stage contact process.

\subsection{Bipartite Infection Model}
Consider the following model with primitive types $\{0,m,f,\}$ where $m$ stands for male and $f$ stands for female.  We can construct a two-sex model of infection spread by defining the following transitions on primitive types:
\begin{itemize}
\item $m\rightarrow 0$ at rate $1$
\item $f\rightarrow 0$ at rate $1$
\item $m\rightarrow m\vee f$ at rate $\lambda$
\item $f\rightarrow m\vee f$ at rate $\lambda$
\item $0\rightarrow f$ at rate $\lambda n_m(x)$
\item $0 \rightarrow m$ at rate $\lambda n_f(x)$
\end{itemize}
where $n_f(x)$ is the cardinality of the set $\{xy:y \textrm{ has an }m\textrm{ type particle}\}$ and $n_m(x)$ is the cardinality of the set $\{xy:y \textrm{ has an }f\textrm{ type particle}\}$, and then extending transitions to the compound type $m\vee f$ in the obvious way (for example $m\vee f\rightarrow f$ and $m\vee f\rightarrow m$ each at rate $1$ from either $m$ or $f$ recovering).  Thus in this model we can imagine that at each site there is one male and one female each of which is either healthy or infectious, and males can only transmit to females, and females to males, either at the same site, or at neighbouring sites.  Taking the event construction in which each of the above transitions, at each site/edge, is assigned its own transition mapping, the model is additive with respect to the join with $0\vee m=m$, $0\vee f=f$ and $m\vee f$ assigned its own (compound) type, moreover each transition is either productive or destructive.  Taking the model on $\mathbb{Z}^d$ with nearest-neighbour interactions, the conditions of Theorem \ref{thmcc} are satisfied, so complete convergence again holds.

\subsection{Household Model}
Consider the following model introduced in \cite{konnohousehold}, which is somewhat similar to the $N$-stage contact process considered in Example \ref{ex1}.  The set of types is $\{0,1,...,N\}$, with parameters $\lambda,\gamma$ and the transitions are
\begin{itemize}
\item $i\rightarrow 0$ at rate $1$
\item $i\rightarrow i+1$ at rate $i\gamma$
\item $0\rightarrow 1$ at rate $\lambda n_N(x)$
\end{itemize}
where $n_N(x)$ is the cardinality of the set $\{xy:y \textrm{ is in state }N\}$.  They also examine the variant where $n_N(x)$ is replaced by $\lambda \sum_{xy \in E}\eta(y)$, the sum of the values at neighbouring sites.  In the first case, take the event construction in which at each site there is a transition mapping for recovery $i\rightarrow 0$ and one mapping for each of the $i\rightarrow i+1$ transitions, $i=1,...,N-1$, and along each edge, there is a mapping for transmission at rate $\lambda$.  In the second case take the same event construction except with $N$ transition mappings for transmission $e_1,...,e_N$ each at rate $\lambda$, with exactly the mappings $e_1,...,e_i$ being used when $\eta(y)=i$.  In both cases the resulting process is additive with respect to the join given by $a\vee b=\max a,b$, and each transition is either productive or destructive.  Taking the model on $\mathbb{Z}^d$ with nearest-neighbour interactions, the conditions of Theorem \ref{thmcc} are satisfied, so complete convergence again holds.

\section*{Acknowledgements}
The author's research is supported by an NSERC PGSD2 scholarship.

\bibliography{twostage}

\begin{thebibliography}{10}

\bibitem{crit}
C.~Bezuidenhout and G.~Grimmett.
\newblock The critical contact process dies out.
\newblock {\em Annals of Probability}, 18(4):1462--1482, 1990.

\bibitem{oriperc}
R.~Durrett.
\newblock Oriented percolation in two dimensions.
\newblock {\em Annals of Probability}, 12(4):999--1040, 1984.

\bibitem{growth}
R.~Durrett and R.H. Schonmann.
\newblock Stochastic growth models.
\newblock {\em Percolation Theory and Ergodic Theory of Infinite Particle
  Systems}, 8:85--119, 1987.

\bibitem{fox}
E.~Foxall.
\newblock New results for the two-stage contact process.
\newblock {\em To appear in J. Appl. Prob, preprint on the ArXiv}.

\bibitem{basic}
D.~Griffeath.
\newblock The basic contact processes.
\newblock {\em Stochastic Processes and their Applications}, 11(2):151--185,
  1981.

\bibitem{perc}
G.~Grimmett.
\newblock {\em Percolation}.
\newblock Springer, second edition, 1999.

\bibitem{gc}
T.E. Harris.
\newblock Additive set valued markov processes and graphical methods.
\newblock {\em Annals of Probability}, 6(3):355--378, 1978.

\bibitem{krone}
S.~Krone.
\newblock The two-stage contact process.
\newblock {\em Annals of Applied Probability}, 9(2):331--351, 1999.

\bibitem{ips}
T.M. Liggett.
\newblock {\em Interacting Particle Systems}.
\newblock Springer, 1985.

\bibitem{sis}
T.M. Liggett.
\newblock {\em Stochastic Interacting Systems: Contact, Voter and Exclusion
  Processes}.
\newblock Springer, 1999.

\bibitem{bookmtbp}
Charles~J Mode.
\newblock {\em Multitype branching processes: Theory and applications},
  volume~34.
\newblock American Elsevier Pub. Co., 1971.

\bibitem{mcp}
Claudia Neuhauser.
\newblock Ergodic theorems for the multitype contact process.
\newblock {\em Probability Theory and Related Fields}, 91(3-4):467--506, 1992.

\bibitem{dualfam}
A.~Sudbury.
\newblock Dual families of interacting particle systems on graphs.
\newblock {\em Journal of Theoretical Probability}, 13(3):695--716, 2000.

\bibitem{konnohousehold}
Nobuaki Sugimine, Naoki Masuda, Norio Konno, and Kazuyuki Aihara.
\newblock On global and local critical points of extended contact process on
  homogeneous trees.
\newblock {\em Mathematical biosciences}, 213(1):13--17, 2008.

\end{thebibliography}
\bibliographystyle{plain}
\end{document}